\theoremstyle{plain}
\newtheorem{theorem}{Theorem}[section]
\newtheorem{lemma}[theorem]{Lemma}
\newtheorem{proposition}[theorem]{Proposition}
\newtheorem{example}[theorem]{Example}
\newtheorem{corollary}[theorem]{Corollary}
\newtheorem{conjecture}[theorem]{Conjecture}
\newcommand{\C}{\mathbb{C}}
\newcommand{\mfg}{\mathfrak{g}}
\newcommand{\mfp}{\mathfrak{p}}
\newcommand{\mfl}{\mathfrak{l}}
\DeclareMathOperator{\mfsl}{\mathfrak{sl}}
\DeclareMathOperator{\mfb}{\mathfrak{b}}
\DeclareMathOperator{\mft}{\mathfrak{t}}
\DeclareMathOperator{\Gr}{Gr}
\DeclareMathOperator{\Fl}{Fl}
\newcommand{\Fla}{\Fl(\textbf{a})}
\DeclareMathOperator{\LInv}{LInv}
\DeclareMathOperator{\Coess}{Coess}
\begin{document}

\title{The Nash blow-up of a cominuscule Schubert variety}

\author{Edward Richmond}
\email{edward.richmond@okstate.edu}

\author{William Slofstra}
\email{weslofst@uwaterloo.ca}

\author{Alexander Woo}
\email{awoo@uidaho.edu}

\begin{abstract}
We compute the Nash blow-up of a cominuscule Schubert variety.  In particular, we show that the Nash blow-up is algebraically isomorphic to another Schubert variety of the same Lie type.  As a consequence, we give a new characterization of the smooth locus and, for Grassmannian Schubert varieties, determine when the Nash blow-up is a resolution of singularities.  We also study the induced torus action on the Nash blow-up and give a bijection between its torus fixed points and Peterson translates on the Schubert variety.
\end{abstract}

\maketitle

\section{Introduction}

The Nash blow-up of a complex algebraic variety is the parameter space of tangent spaces over its smooth locus together with the limits of tangents spaces over its singular locus.  One motivation for studying the Nash blow-up is that its tautological bundle serves as an analogue of the tangent bundle for singular varieties.  In particular, in~\cite{Ma74}, MacPherson uses the Nash blow-up to develop a theory of characteristic classes for singular varieties.  Another motivation is that, while Nash blow-ups are not smooth in general, they seem to partially resolve singularities.  In \cite{No75}, Nobile proves that for algebraic curves, iterated Nash blow-ups eventually desingularize the curve.  It is an open question whether or not iterated Nash blow-ups eventually desingularize all algebraic varieties.

The main result of this paper is an explicit calculation the Nash blow-up of a Schubert subvariety of a cominuscule flag variety.  In particular, we show that the Nash blow-up is isomorphic to another Schubert variety in a generalized flag variety of the same Lie type.  Cominuscule flag varieties come in four infinite families plus two exceptional cases:
\begin{itemize}
\item the Grassmannian of $k$-dimensional subspaces in $\mathbb{C}^n$,
\item the Lagrangian Grassmannian,
\item the maximal even orthogonal Grassmannian,
\item quadric hypersurfaces, and
\item two exceptional cases of type $E_6$ and $E_7$.
\end{itemize}
As a corollary, we give a new characterization the smooth/singular locus of a cominuscule Schubert variety.  Moreover, for Schubert varieties in the Grassmannian, we determine when the Nash blow-up is a resolution of singularities.  We remark that alternate descriptions of the smooth/singular locus of cominuscule Schubert varieties have been given in \cite{BP99, Pe09, Ro14}.

In \cite{CK03}, Carrell and Kuttler (inspired by work of Peterson) study chains of curves in the Nash blow-up of torus-stable subvarieties in the flag variety with the goal of characterizing their singular loci.  For Schubert varieties, they characterize the smooth torus-fixed points as follows.  Any chain of torus-stable curves starting at the fixed point of the Schubert cell induces a chain of curves leading to a torus-fixed point of the Nash blow-up, and Carrell and Kuttler show that the singular locus of a Schubert variety is the closure of the Borel orbits of the torus-fixed points which have such curves leading to different points in the fiber over the given point.  They make these paths computationally explicit via a process we call combinatorial Peterson translation.  A consequence of our results is that Peterson translation finds all the torus-fixed points of the Nash blow-up of a cominuscule Schubert variety, not just enough of them to distinguish the singular locus.  It would be interesting to know if Peterson translation, or some other combinatorial procedure, finds all the torus-fixed points for Nash blow-ups of arbitrary Schubert varieties.

In \cite{Ma74}, MacPherson defines the Chern--Schwartz--Macpherson (CSM) class and (following unpublished work of Mather) the Chern--Mather (CM) class for a variety.  These classes are analogues of the total Chern class (of the tangent bundle) for a smooth variety.  Aluffi and Mihalcea give a formula for the CSM class of any Schubert variety \cite{AM09, AM16} and later, with Schuermann and Su, prove that these classes expand positively in terms of Schubert classes \cite{AMSS17}.  For CM classes, Jones provides an algorithm for computing them in the case of Schubert varieties of Grassmannians \cite{Jo10}.  Note that finding a combinatorially positive formula for these classes remains an open problem.  While MacPherson's original construction of these classes uses the Nash blow-up, the techniques used by the authors above instead make use of certain resolutions of Schubert varieties to compute these classes.  In the case of Grassmannian Schubert varieties, Aluffi and Mihalcea use the Bott-Samelson resolution to compute CSM classes, and Jones uses the Zelevinsky resolution for CM classes.  As another corollary of our work, we give an explicit relationship between the Nash blow-up of a Grassmannian Schubert variety and its Zelevinsky resolutions.  More precisely, we show the Nash blow-up is isomorphic to a fiber product of two particular Zelevinsky resolutions and conjecture that a similar result holds for the more general class of covexillary Schubert varieties.

We structure this paper as follows.  In Section \ref{S:Prelim}, we give preliminaries on Nash blow-ups and prove our main structure theorem on the Nash blow-ups of cominuscule Schubert varieties.  In Section \ref{S:P_translates}, we discuss Peterson translation on Schubert varieties and show how these translates are bijectively related to the torus-fixed points of the Nash blow-up.  Finally, in Section \ref{S:type_A_case}, we go over specifics for type A Grassmannian Schubert varieties.  In particular, we describe the Nash blow-up as a configuration space and characterize when it is a resolution of singularities.  We also show how the Nash blow-up is related to the Zelevinsky resolution of Grassmannian Schubert varieties and conjecture that a similar relationship holds for covexilliary Schubert varieties.

\subsection*{Acknowledgements}  We would like to thank ICERM for their hospitality in hosting our Collaborate@ICERM focus group in August 2017 from which this project originated.  ER was supported by a Young Investigators NSA grant H98230-16-1-0014. AW was supported by Simons Collaboration grant 359792. We thank Shrawan Kumar, Jenna Rajchgot, and Jesper Thomsen for helpful correspondence and Gary Kennedy and Leonardo Mihalcea for pointers to references.  We especially thank Jim Carrell for helpful conversations and insights on Peterson translation.

\section{Identifying Nash blow-ups}\label{S:Prelim}
Let $X$ be a $d$-dimensional complex projective algebraic variety
(by which we mean a reduced, irreducible scheme).
The {\bf Nash blow-up} of $X$ is defined as follows.
Let $Y$ be a smooth projective variety with an embedding $X\rightarrow Y$, and let $T_*Y$ denote the tangent bundle of $Y$.  Associated to the tangent bundle $T_*Y$ is the Grassmann bundle $\mathcal{G}(d,T_*Y)$
whose fiber over a point $y\in Y$ is the Grassmannian $\Gr(d,T_y Y)$
of $d$-dimensional subspaces of the tangent space $T_y Y$.  The Grassmann bundle can be realized as the subvariety of decomposable forms in the $d$-th exterior power $\wedge^d T_* Y$ of the tangent bundle.
Let $\pi:\mathcal{G}(d,T_*Y)\rightarrow Y$ denote the canonical projection.

Let $X^\mathrm{reg}$ denote the smooth locus of $X$.  Given $x\in X^\mathrm{reg}$, we have a $d$-dimensional
tangent space $T_x X\subseteq T_x Y$.
Define a map $\phi: X^\mathrm{reg}\rightarrow \mathcal{G}(d,T_*Y)$ by $\phi(x):=T_x X$.
Note that $\phi$ is a section of $\pi$ over $X^\mathrm{reg}$.
Now let $\widehat{X}:=\overline{\phi(X^\mathrm{reg})}$ be
the closure of the image of $\phi$.  Since $X^\mathrm{reg}$
is dense in $X$, the image $\pi(\widehat{X})=X$.
The variety $\widehat{X}$ with the canonical map $\pi:\widehat{X}\rightarrow X$ is known as the Nash blow-up of $X$.  In \cite{No75}, Nobile shows that $\widehat{X}$ is independent of the choice of embedding $X\rightarrow Y$ and hence canonical to $X$.  Note that it is not necessary to take all of $X^\mathrm{reg}$ in the above construction; any dense subset of $X^\mathrm{reg}$ suffices.

\subsection{The Nash blow-up of Schubert varieties}

Let $G$ be a connected semi-simple Lie group over $\C$. Let $T\subset B\subset P$ be a maximal torus, Borel, and parabolic subgroup of $G$.  Let $L,U\subseteq P$ denote the Levi subgroup and unipotent radical of $P$.  Let $W_L=W_P\subset W$ be the Weyl groups of $P$ and $G$, and let $W^P\simeq W/W_P$ denote the set of minimal length coset representatives.  We will use Gothic letters $\mfg,\mfp,\mfb,\mft,\mfl$ to denote the Lie algebras of the corresponding groups $G,P,B, T$ and $L$.  Let $R=R^+\sqcup R^-$ and $R_{L}=R^+_{L}\sqcup R^-_{L}$ denote the sets of roots, positive roots and negative roots for $G$ and $L$ respectively, and let $\Delta$ and $\Delta_{L}$ denote the sets of positive simple roots for $G$ and $L$.  If $\alpha\in R$, let $\mfg_{\alpha}$ denote the corresponding root space in $\mfg$.  The tangent space at the identity of the flag variety $G/P$ naturally identifies with
$$T_{eP}(G/P)=\mfg/\mfp=\bigoplus_{\alpha\in R^+\setminus R^+_{L}} \mfg_{-\alpha}.$$
The tangent bundle identifies as $$T_*(G/P)=G\times_P \mfg/\mfp$$
and the Grassmann bundle
$$\mathcal{G}(d,T_*(G/P))=G\times_P \Gr(d,\mfg/\mfp).$$
For any $w\in W^P$, define the (shifted) Schubert cell
$$\Lambda_w^P:=w^{-1}BwP/P\subset G/P.$$  The Schubert variety is defined as the closure $\overline\Lambda_w^P$.  Observe that $\Lambda_w^P$ is a dense set in the smooth locus of $\overline\Lambda_w^P$ and $eP\in\Lambda_w^P$. Let
$$T_w:=T_{eP}(\Lambda_w^P)=\bigoplus_{\alpha\in (R^+\setminus R^+_{L})\cap w^{-1}(R^-)} \mfg_{-\alpha}$$ denote the tangent space of $\Lambda_w^P$ (equivalently $\overline\Lambda_w^P$) at $eP$.  If $\dim(\Lambda_w^P)=d$, then the map $$\phi:\Lambda_w^P\rightarrow \mathcal{G}(d,T_*(G/P))$$ is given by $\phi(gP)=(g,T_w).$  The Nash blow-up of the Schubert variety $\overline\Lambda_w^P$ is
\begin{equation}\label{Eq:Nash_blowup}
\overline{\phi(\Lambda_w^P)}=\overline{w^{-1}BwP\times_P T_w}=\overline{w^{-1}BwP}\times_P \overline{P\cdot T_w}.
\end{equation}

\subsection{The Nash blow-up of cominuscule Schubert varieties}

If $P$ is a maximal parabolic subgroup, then there is a unique $\alpha\in\Delta\setminus\Delta_L$.  In this case, we say that $\alpha$ is the simple root associated with the maximal parabolic $P$.  A maximal parabolic subgroup $P$ is {\bf cominuscule} if its associated simple root appears with coefficient one in the unique highest root of $G$.  We say a flag variety $G/P$ (respectively a Schubert variety $\overline\Lambda_w^P$) is {\bf cominuscule} if $P$ is cominuscule.  A parabolic $P$ is cominuscule if and only if its unipotient radical $U$ is abelian; a proof is given by Richardson, R\"ohrle, and Steinberg \cite[Lemma 2.2]{RRS92}.    We now state and prove the main theorem of this paper.

\begin{theorem}\label{T:main1}
Let $w\in W^P$ and $\overline\Lambda_w^P$ be a cominuscule Schubert variety.  Then the Nash blow-up $\overline{\phi(\Lambda_w^P)}$ is a Schubert variety.  In particular,
$$\overline{\phi(\Lambda_w^P)}\simeq \overline{w^{-1}BwQ}/Q$$ for the standard parabolic subgroup $Q\subseteq P$, where $Q$ is generated by the set of simple roots
$$\Delta_w:=\{\beta\in\Delta_{L}\mid w(\beta)\in\Delta\}.$$ Equivalently, $Q=BW_{\Delta_w}B$.
\end{theorem}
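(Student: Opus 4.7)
The plan is to use equation~(\ref{Eq:Nash_blowup}) to reduce the theorem to computing the stabilizer of $T_w$ under the $P$-action on $\Gr(d,\mfg/\mfp)$.  Once one proves $\mathrm{Stab}_P(T_w)=Q$, the orbit $P\cdot T_w$ equals $P/Q$ and is already closed in $\Gr(d,\mfg/\mfp)$ since $P/Q$ is a projective homogeneous space.  The standard isomorphism $G\times_P(P/Q)\cong G/Q$, $[g,pQ]\mapsto gpQ$, carries the section $\phi(uP)=[u,T_w]$ (for $u\in w^{-1}Bw$) to the coset $uQ$, so $\phi(\Lambda_w^P)$ corresponds to $w^{-1}BwQ/Q$ and $\overline{\phi(\Lambda_w^P)}$ to the Schubert variety $\overline{w^{-1}BwQ}/Q$ in $G/Q$.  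Everything therefore reduces to the stabilizer calculation.

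The first key input is that the cominuscule hypothesis forces $U$ to act trivially on $\mfg/\mfp$.  Let $\alpha_P\in\Delta\setminus\Delta_L$ be the simple root defining $P$.  Since the coefficient of $\alpha_P$ in the highest root of $G$ equals $1$, every root in $\mfu$ has $\alpha_P$-coefficient $+1$ and every root in $\mfu^-\cong\mfg/\mfp$ has $\alpha_P$-coefficient $-1$.  Hence $[\mfu,\mfu^-]\subseteq\mfl$ and $\ad(x)^k(\mfu^-)\subseteq\mfp$ for all $x\in\mfu$ and $k\ge 2$, so the power series for $\Ad(\exp x)$ collapses modulo $\mfp$ to the identity on $\mfu^-$.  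Combined with $B=U\cdot B_L$, this reduces the problem to showing that $\mathrm{Stab}_L(T_w)$ is the standard parabolic of $L$ generated by $B_L$ together with the simple negative root groups $U_{-\beta}$ for $\beta\in\Delta_w$.

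A short root-string calculation with the $\alpha_P$-grading shows that all positive root groups of $L$ preserve $T_w$: for $\gamma\in R^+_L$ and $\alpha\in S:=(R^+\setminus R^+_L)\cap w^{-1}(R^-)$ with $\alpha-\gamma$ a root, one has $w(\alpha-\gamma)=w(\alpha)-w(\gamma)\in R^-$, since $w(\alpha)\in R^-$ and $w(\gamma)\in R^+$ (using $w\in W^P$).  Thus $B_L\subseteq\mathrm{Stab}_L(T_w)$, and the remaining question for $\beta\in\Delta_L$ becomes: for every $\alpha\in S$ with $\alpha+\beta$ a root, is $\alpha+\beta\in S$?  The easy direction $\beta\in\Delta_w\Rightarrow U_{-\beta}$ preserves $T_w$ is a simple root-system observation: if $w(\alpha)=-\delta$ and $w(\beta)\in\Delta$, then $w(\beta)-\delta$ can be a root only when $\delta$ contains $w(\beta)$ in its support, in which case $\delta-w(\beta)\in\Z_{\geq 0}\Delta$ is a positive root and $w(\alpha+\beta)\in R^-$.

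The main obstacle is the converse.  Assume $w(\beta)\in R^+\setminus\Delta$; we must produce $\alpha\in S$ with $\alpha+\beta$ a root and $\alpha+\beta\notin S$.  The key auxiliary lemma is that any decomposition $w(\beta)=\mu+\mu'$ with $\mu,\mu'\in R^+$ (which exists because $w(\beta)$ is non-simple) must have at least one summand in $R^+\cap w(R^-)$; otherwise $\beta=w^{-1}(\mu)+w^{-1}(\mu')$ would realize the simple root $\beta$ as a sum of two positive roots, which is impossible.  Taking such a $\mu$ and setting $\alpha:=-w^{-1}(\mu)$, one checks $\alpha\in S$ (the condition $\alpha\notin R^+_L$ follows from $w(R^-_L)\subseteq R^-$), while $w(\alpha+\beta)=\mu'\in R^+$ shows $\alpha+\beta\notin S$.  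This completes the identification of $\mathrm{Stab}_L(T_w)$, hence of $\mathrm{Stab}_P(T_w)=Q$, and the theorem follows from the reduction in the first paragraph.
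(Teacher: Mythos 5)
Your proof is correct and follows essentially the same route as the paper: reduce via Equation~(\ref{Eq:Nash_blowup}) to computing $\mathrm{Stab}_P(T_w)$, use the cominuscule hypothesis to kill the $U$-action on $\mfg/\mfp$ (you argue directly via the $\alpha_P$-grading, the paper cites that $U$ is abelian — equivalent facts), observe $B_L\subseteq\mathrm{Stab}_L(T_w)$, and then determine which negative simple root groups of $L$ stabilize $T_w$ by the same two-case root-system analysis on whether $w(\beta)$ is simple. The differences are cosmetic, not structural.
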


\begin{proof}
Consider the Levi factorization of $P=L\cdot U$. Since $P$ is cominuscule, the unipotent radical $U$ is abelian and hence acts trivially on $\mfg/\mfp$.  Furthermore, $U$ acts trivially on the Grassmannian $\Gr(d,\mfg/\mfp).$  Hence the orbit $P\cdot T_w=L\cdot T_w$ in $\Gr(d,\mfg/\mfp).$  If $Q'$ denotes the stabilizer of $T_w$ in $L$, then $L\cdot T_w=L/Q'$.  Since $w\in W^P$, the Borel subgroup $B_L:=B\cap L\subseteq w^{-1}Bw$, and thus $T_w$ is a $B_L$-module.  Hence $B_L\subseteq Q'$, so $Q'$ is a standard parabolic subgroup of $L$.  Let $Q$ be the standard parabolic subgroup of $P$ for which $Q'=L\cap Q$ and hence $P/Q\simeq L/Q'$.  Thus $P\cdot T_w=P/Q$, and since $P/Q$ is closed, we have $\overline{P\cdot T_w}=P/Q$.  The first part of the theorem is proved by the following calculation using Equation \eqref{Eq:Nash_blowup}:
\begin{align*}
\overline{\phi(\Lambda_w)}=\overline{w^{-1}BwP}\times_P \overline{P\cdot T_w}\simeq \overline{w^{-1}BwP}\times_P P/Q=\overline{w^{-1}BwQ}/Q.
\end{align*}

For any $\beta\in R$, let $U_\beta$ denote the root subgroup of $G$ corresponding to the root space $\mfg_\beta$.  To prove the second part of the theorem, we will show that $\beta\in \Delta_w$ if and only if $U_\beta$ stabilizes
$$T_w=\bigoplus_{\alpha\in R^+\setminus R^+_{L}\cap w^{-1}R^-} \mfg_{-\alpha}.$$
First note that $U_\beta$ stabilizes $T_w$ if and only if, for all $\alpha\in R^+\setminus R^+_{L}\cap w^{-1}R^-$, we have $U_\beta\cdot \mfg_{-\alpha}\subseteq T_w$.  Passing to the Lie algebra, this inclusion holds if and only if $[\mfg_{-\beta},\mfg_{-\alpha}]\subseteq T_w$.  In other words, $U_\beta\cdot \mfg_{-\alpha}\subseteq T_w$ if and only if $\alpha+\beta\in  R^+\setminus R^+_{L}\cap w^{-1}R^-$ whenever $\alpha\in  R^+\setminus R^+_{L}\cap w^{-1}R^-$ and $\alpha+\beta\in R^+\setminus R^+_{L}$.  Hence it suffices to show this last property holds for $\beta\in\Delta_{L}$ precisely when $w(\beta)$ is simple.

Suppose $w(\beta)$ is simple.  Since $w$ is a minimal coset representative and $\beta\in R^+_{L}$, the root $w(\beta)$ is also positive.  If $\alpha\in R^+\setminus R^+_{L}\cap w^{-1}R^-$, then $w(\alpha)\in R^-$.  Furthermore, if $\alpha+\beta\in R$, then  $w(\alpha+\beta)=w(\alpha)+w(\beta)\in R^-$ since it is the sum of a negative root and a positive simple root.

Now suppose $w(\beta)$ is not simple.  Then $w(\beta)=w(\alpha)+w(\alpha')$ for some roots $\alpha,\alpha'$ where $w(\alpha),w(\alpha')\in R^+$.  Since $\beta$ is simple and $\beta=\alpha+\alpha'$, either $\alpha$ or $\alpha'$ is negative; without loss of generality, assume $\alpha\in R^-$ and $\alpha'\in R^+$.  Since $w$ is a minimal coset representative, $\alpha\in R^-$, and $w(\alpha)\in R^+$, we must have $\alpha\in R\setminus R_{L}$ and hence $\alpha'\in R\setminus R_{L}$.  Otherwise, we would have $\alpha=\beta-\alpha'\in R_{L}$.  We now have that $-\alpha\in R^+\setminus R^+_{L}\cap w^{-1}R^-$ and $\alpha'=-\alpha+\beta\not\in R^+\setminus R^+_{L}\cap w^{-1}R^-$.  Hence $[\mfg_{-\beta},\mfg_{\alpha}]\not\subseteq T_w$ even though $\mfg_{\alpha}\subseteq T_w$.
\end{proof}

One immediate consequence of Theorem \ref{T:main1} is that the Nash blow-up of a cominuscule Schubert variety is normal and Cohen-Macaulay.  Another consequence is a new characterization of the smooth locus.  Let $\leq$ denote the Bruhat partial order on the Weyl group $W$ and let $[e,w]$ denote Bruhat interval $\{y\in W\mid e\leq y\leq w\}$.  Define the (unshifted) Schubert variety $$X_w^P:=\overline{BwP}/P=\bigcup_{v\in [e,w]\cap W^P} BvP/P.$$

\begin{corollary}\label{C:smooth_locus}
Let $P$ be cominuscule with $w\in W^P$ and $v\in [e,w]\cap W^P$.  Let $Q$ be generated by $\Delta_w$ as in  Theorem \ref{T:main1}.  Then $vP$ is a smooth point of $X_w^P$ if and only if $$vW_P\cap [e,w]\subseteq vW_Q.$$
\end{corollary}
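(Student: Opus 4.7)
The plan is to reduce smoothness of $vP \in X_w^P$ to the Nash blow-up fiber over $vP$ being a single point, and then characterize this fiber condition combinatorially via $T$-fixed points. First, I would translate from $X_w^P$ to $\overline{\Lambda_w^P}$ by left multiplication by $w$, which is an isomorphism of $G/P$ carrying each $T$-fixed point $w^{-1}uP$ to $uP$ for $u \in [e,w] \cap W^P$. Applying this translation (and its analog for $Q$) to Theorem~\ref{T:main1}, the Nash blow-up of $X_w^P$ is identified with the natural projection $\pi \colon X_w^Q \to X_w^P$ obtained by restricting the $P/Q$-fiber bundle $G/Q \to G/P$. Since $W_Q \subseteq W_P$ forces $W^P \subseteq W^Q$, the element $v$ lies in $W^Q$, and the $T$-fixed points of $\pi^{-1}(vP)$ are exactly $\{uQ : u \in vW_P \cap [e,w] \cap W^Q\}$.

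Next, I would show that $vP$ is smooth in $X_w^P$ if and only if $\pi^{-1}(vP)$ is a single point. The forward direction is immediate because $\pi$ is an isomorphism over the smooth locus. For the converse, if the fiber at $vP$ is a single point, then upper semi-continuity of fiber dimension makes $\pi$ quasi-finite in a neighborhood of $vP$; being proper, $\pi$ is then finite there; and since $\pi$ is birational with $X_w^P$ normal (Schubert varieties being normal), $\pi$ is an isomorphism on this neighborhood. Nobile's theorem then forces $X_w^P$ to be smooth at $vP$.

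Finally, I would verify that $\pi^{-1}(vP)$ is a single point if and only if $vW_P \cap [e,w] \subseteq vW_Q$. The fiber is a $T$-stable projective variety with isolated $T$-fixed points and is connected by Zariski's connectedness theorem, so it is a single point precisely when it has a unique $T$-fixed point. Since $v$ itself gives one such fixed point, and since the natural projection $W \to W^Q$ sending an element to the minimum-length representative of its $W_Q$-coset preserves Bruhat order, uniqueness of the $T$-fixed point in the fiber is equivalent to $vW_P \cap [e,w] \subseteq vW_Q$.

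The main obstacle is the converse in the second step: single-point Nash blow-up fibers do not generally force smoothness (as with the cuspidal cubic $y^2 = x^3$). What makes the argument go through for Schubert varieties is the combination of normality, which lets us invoke Zariski's main theorem, and Nobile's theorem; both ingredients are available in our setting.
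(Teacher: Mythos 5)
Your proof is correct and takes essentially the same route as the paper: identify the Nash blow-up with the natural projection $\pi\colon X_w^Q \to X_w^P$, reduce smoothness at $vP$ to the fiber $\pi^{-1}(vP)$ being a single point, and recast the singleton-fiber condition as the stated Bruhat-order condition. The only differences are in the details you fill in: the paper simply asserts the equivalence between smoothness at $vP$ and $|\pi^{-1}(vP)|=1$ and cites a $B$-orbit decomposition of the fiber from an earlier paper of two of the authors, whereas you correctly supply the justification for the former via properness, normality of Schubert varieties, Zariski's main theorem, and Nobile's theorem (your cuspidal-cubic caveat explaining why normality is essential is apt), and you compute the $T$-fixed points of the fiber directly rather than quoting the cell decomposition; both routes lead to the same combinatorial conclusion.
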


\begin{proof}
By Theorem \ref{T:main1}, we have the Nash blow-up $\pi: X_w^Q\rightarrow X_w^P$. Now $vP$ is a smooth point of $X_w^P$ if and only if $|\pi^{-1}(vP)|=1$.  By \cite[Lemma 4.6]{RS16}, we have that
$$\pi^{-1}(vP)=v\bigcup BuQ/Q,$$ where the union is over $u\in W^Q\cap W_P$ such that $vu\in [e,w]$.  This fiber is a single point precisely when $v(W^Q\cap W_P)\cap [e,w]=\{v\}$, which is equivalent to $vW_P\cap [e,w]\subseteq vW_Q$.
\end{proof}

We remark that alternate descriptions of the smooth/singular locus of cominscule Schubert varieties have been given in \cite{BP99, Pe09, Ro14}.  We do not know of a direct combinatorial proof connecting Corollary \ref{C:smooth_locus} to any of these characterizations.


\section{Combinatorial Peterson translates}\label{S:P_translates}

Let $X_w^P$ be an arbitrary Schubert variety of dimension $d$, and suppose $x,y\in X_w^P$ are the unique $T$-fixed points in some $T$-invariant curve $C\subseteq X_w^P$.  Then there exists $\alpha\in R^+$ such that $r_\alpha x=y$ and $C=\overline{U_\alpha x}$, where $r_\alpha\in W$ is the reflection and $U_\alpha$ is the root subgroup associated to $\alpha$.  Geometrically, {\bf Peterson translation}, defined in \cite{CK03} by Carrell and Kuttler, is the act of translating the Zariski tangent space $T_x(X_w^P)$ along the open curve $C\setminus\{y\}$.  Taking the limit gives a subspace of $T_y(X_w^P)$ of dimension $\dim T_x(X_w^P)$.  If $x$ is a smooth point, this process produces a curve in the Nash blow-up of $X_w^P$ connecting the tangent space of $x$ and a point in the fiber over $y$.  If $x$ is not smooth, then we can apply the same process to any $d$-dimensional subspace $M\subseteq T_x(X_w^P)$ to produce a curve in $\mathcal{G}(d,T_*(G/P))$ connecting $M$ and a point in the fiber over $y$.  If $M$ is in the Nash blow-up of $X_w^P$, then the resulting curve will also be in the Nash blow-up.  For any subspace $M\subseteq T_x(X_w^P)$ and $\alpha\in R^+$, define $$\tau_\alpha(x,M)\subseteq T_{r_\alpha x}(X_w^P)$$ to be the Peterson translate of $M$ along the root $\alpha$.

In the next subsection, we describe a combinatorial version of the map $\tau_{\alpha}$, given in \cite{CK03}, for Schubert varieties in an arbitrary $G/P$.  In this case, the $T$-fixed points $x$ are elements of the Weyl group $W$.  Furthermore, we only consider subspaces $M\subseteq T_x(X^P_w)$ that are $T$-stable, and we represent a subspace $M$ by the weights (which will be roots) of the $T$-action.  We will then give, for $P$ cominuscule and $w\in W^P$, an explicit bijection between the $T$-fixed points in the Nash blow-up of $X_w^P$ and the combinatorial Peterson translates on $X_w^P$.

\subsection{Combinatorial Peterson translation}
Fix a Weyl group $W$ with root system $R$ and a standard parabolic subgroup $W_P\subseteq W$ with root system $R_L\subseteq R$.  (We use the Levi subgroup $L\subseteq P$ to index $R_L$.)  For an element $w\in W$, the {\bf left inversion set} of $w$ is
$$\LInv(w):=w(R^-)\cap R^+.$$  Furthermore, define the {\bf $P$-left inversion set} of $w$ as
$$\LInv^P(w):=w(R^-\setminus R^-_L)\cap R^+.$$  Note that, in general, $\LInv^P(w)\subseteq \LInv(w)$, with equality if and only if $w\in W^P$.  Furthermore, if $w=vu$ is the parabolic decomposition of $w$, with $v\in W^P$ and $u\in W_P$, then $\LInv^P(w)=\LInv^P(v)$, since $u(R^-\setminus R^-_L)=R^-\setminus R^-_L$.

Given an element $w\in W^P$, a subset $M\subseteq w(R^-\setminus R^-_L)$, and a root $\alpha\in \LInv(w)=\LInv^P(w)$, the {\bf combinatorial Peterson translate} of $M$ from $w$ by $\alpha$, denoted $\tau_\alpha(w, M)$, is defined as follows:

An {\bf $\alpha$-string} in $w(R^-\setminus R_L^-)$ is an equivalence class of $w(R^-\setminus R_L^-)$ under the equivalence relation where $\beta\sim\beta'$ if $\beta-\beta'$ is a multiple of $\alpha$.  We denote the $\alpha$-string containing $\beta$ by $[\beta]_{\alpha,w}$.  Any $\alpha$-string $[\beta]_{\alpha,w}$ has a unique element $\mu_{[\beta]_{\alpha,w}}$ such that $\mu_{[\beta]_{\alpha,w}}-\alpha\not\in w(R^-\setminus R_L^-)$.  We call $\mu$ the {\bf $\alpha$-minimal} element of the $\alpha$-string.

Given a subset $M\subseteq w(R^-\setminus R_L^-)$, define the $\alpha$-shift of $M$, denoted $\sigma_\alpha(w,M)$, to be
$$\sigma_\alpha(w,M):=\bigcup_{\beta\in M} \{\mu_{[\beta]_{\alpha,w}}+k\alpha \mid 1\leq k\leq \#([\beta]_{\alpha,w}\cap M)\}.$$  In other words, $\sigma_\alpha(w,M)$ is the result of shifting every $\alpha$-string in $M$ as far towards its $\alpha$-minimal element as possible.  We define the combinatorial Peterson translate
$$\tau_\alpha(w,M):=(\widetilde{r_\alpha w}, r_\alpha(w, \sigma_\alpha(M))),$$
where $\widetilde{r_\alpha w}$ denotes the minimal coset representative for $r_\alpha w$ in $r_\alpha wW_P$.  Here, $r_\alpha$ is considered first as an element of $W$ and second as a reflection acting on each root in  $\sigma_\alpha(M)\subseteq R$.  As mentioned at the beginning of the section, if $vP$ is a $T$-fixed point in the Schubert variety $X_w^P$ and $M\subseteq v(R^-\setminus R_L^-)$ is the set $T$-weights of some subspace of $T_{vP}(X_w^P)$, then $\tau_\alpha(v,M)$ will be the set of  $T$-weights of the subspace in $T_{r_\alpha vP}(X_w^P)$ obtained by Peterson translating along the root $\alpha$.


Given an element $w\in W^P$, we say $(z, N)$ is an {\bf eventual Peterson translate} of $w$ (with respect to $P$) if there exists a sequence
$$(w,\LInv(w))=(z_0, M_0)\xrightarrow[\tau_{\gamma_1}]{}(z_1, M_1)\xrightarrow[\tau_{\gamma_2}]{}  \cdots  \xrightarrow[\tau_{\gamma_k}]{}(z_k, M_k)=(z,N)$$
where $(z_i, M_i)=\tau_{\gamma_i}(z_{i-1}, M_{i-1})$ for some $\gamma_i\in\LInv(z_{i-1})$.
Geometrically, eventual Peterson translation corresponds to taking the tangent space at the unique $T$-fixed point in the Schubert cell and translating along a sequence of $T$-stable curves in the Nash blow-up. In \cite[Theorem 1.3]{CK03}, Carrell and Kuttler prove that $uP$ is a singular point of $X_w^P$ if and only if there exists $v\geq u$ such that there are two eventual Peterson translates $(v,N)$ and $(v,N')$ of $w$ with $N\neq N'$.


Our goal in this section is to prove the following theorem.  In general, we would like to know if the same result holds for arbitrary $P$ and $w$.

\begin{theorem}\label{T:main2}
Let $G/P$ be a cominuscule flag variety, and let $w\in W^P$.  Then there is a bijection between the set of $T$-fixed points of the Nash blow-up of $X_w^P$ and the eventual Peterson translates of $w$ (with respect to $P$).

If $Q$ is the parabolic generated by $\Delta_w$ as in  Thoerem \ref{T:main1}, then this bijection is given explicitly by sending $v\in W^Q\cap [e,w]$ to $(\widetilde{v}, vw^{-1}(\LInv^P(w))$.
\end{theorem}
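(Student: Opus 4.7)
The plan is to combine Theorem~\ref{T:main1} with a direct weight computation to put the $T$-fixed points of the Nash blow-up into the stated explicit form, and then match these with eventual Peterson translates by induction on a reduction path in $[e,w]\cap W^Q$.

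First, I would identify the $T$-fixed points of the Nash blow-up as a subset of $\mathcal{G}(d,T_*(G/P))$. By Theorem~\ref{T:main1}, the Nash blow-up of $X_w^P$ is (canonically) $X_w^Q$, whose $T$-fixed points are $\{vQ : v\in W^Q\cap [e,w]\}$. Writing $v=\widetilde{v}u$ with $\widetilde{v}\in W^P$ and $u\in W_P$, the description of the Nash blow-up from the proof of Theorem~\ref{T:main1} places $vQ$ at the pair $(\widetilde{v}P,(L_{\widetilde{v}})_*(u\cdot T_w))\in \mathcal{G}(d,T_*(G/P))$. A direct computation using that the $T$-weights of $T_w$ in $\mathfrak{g}/\mathfrak{p}$ are $w^{-1}\LInv^P(w)\subseteq R^-\setminus R_L^-$ then shows that the resulting subspace of $T_{\widetilde{v}P}(G/P)$ has $T$-weights $\widetilde{v}\cdot u\cdot w^{-1}\LInv^P(w)=vw^{-1}\LInv^P(w)$. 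Since $(\widetilde{v},vw^{-1}\LInv^P(w))$ determines $v$ uniquely, the map $v\mapsto (\widetilde{v},vw^{-1}\LInv^P(w))$ is an injection into pairs of $T$-fixed points with $T$-stable tangent subspaces.

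Next, I would show that the image of this map coincides with the set of eventual Peterson translates of $w$. The base case is $v=w$: since $w\in W^P\subseteq W^Q$ we have $\widetilde{w}=w$ and $\LInv^P(w)=\LInv(w)$, so $(\widetilde{w},ww^{-1}\LInv^P(w))=(w,\LInv(w))$ is the starting pair. The inductive step reduces to the one-step identity
\[
\tau_\gamma(\widetilde{v},vw^{-1}\LInv^P(w))=(\widetilde{v'},v'w^{-1}\LInv^P(w))
\]
for an appropriate $v'\in W^Q\cap [e,w]$ and each $\gamma\in \LInv(\widetilde{v})$. Geometrically, this corresponds to lifting the $T$-stable curve $\overline{U_\gamma \widetilde{v}P}$ from $X_w^P$ to a $T$-stable curve in the Nash blow-up $X_w^Q$ through $vQ$ and identifying its other endpoint with $v'Q$. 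The converse statement, that every $v'\in W^Q\cap[e,w]\setminus\{w\}$ arises this way from some $v$ with $\ell(v)>\ell(v')$, completes the bijection.

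The main obstacle is the combinatorial verification of the one-step identity above. Unwinding the definition of $\tau_\gamma$ requires analyzing $\gamma$-strings in $\widetilde{v}(R^-\setminus R_L^-)$, computing the shift $\sigma_\gamma$, and then applying the reflection $r_\gamma$, and then checking that the result is exactly the set $v'w^{-1}\LInv^P(w)$ while independently $\widetilde{r_\gamma\widetilde{v}}=\widetilde{v'}$ in $W^P$. This is where the cominuscule hypothesis pays off: because the unipotent radical $U$ is abelian, every root in $R\setminus R_L$ has coefficient one in the simple root associated to $P$, so $\gamma$-strings inside $R\setminus R_L$ have length at most two and $\sigma_\gamma$ admits a very explicit description. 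Combined with the Schubert structure on $W^Q\cap[e,w]$ coming from Theorem~\ref{T:main1}, this rigidity forces every Peterson translate to stay inside our parameterized family and conversely lets us realize any $v'$ covered by $v$ in the appropriate Bruhat-like order on $W^Q\cap[e,w]$ as a single $\tau_\gamma$-step from $v$.
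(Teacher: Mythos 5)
Your overall strategy matches the paper's: parametrize the $T$-fixed points of $X_w^Q$ by $W^Q\cap[e,w]$ (via Theorem~\ref{T:main1}), compute the $T$-weights of the corresponding tangent subspace, show the map $v\mapsto(\widetilde{v},vw^{-1}\LInv^P(w))$ is injective, and then use the cominuscule rigidity of $\alpha$-strings to match this family with eventual Peterson translates by induction. A minor slip: since both $\gamma$ and every root in $z(R^-\setminus R_L^-)$ have coefficient $\pm 1$ on the cominuscule simple root, the $\gamma$-strings inside $z(R^-\setminus R_L^-)$ have length exactly one, not ``at most two''; this is what makes $\sigma_\gamma$ trivial and reduces $\tau_\gamma$ to $(\widetilde{r_\gamma z},r_\gamma(M))$.

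The genuine gap is the surjectivity direction, which you state but do not prove: ``The converse statement, that every $v'\in W^Q\cap[e,w]\setminus\{w\}$ arises this way from some $v$ with $\ell(v)>\ell(v')$.'' This does not follow from ``the Schubert structure on $W^Q\cap[e,w]$'' nor from cominuscule rigidity. The issue is that a combinatorial Peterson translate step $\tau_\gamma$ from $y$ is only allowed for $\gamma\in\LInv^P(y)$, i.e.\ for those Bruhat covers $z=r_\gamma y<y$ with $\gamma\notin y(R_L)$. Generic downward Bruhat covers from an element below $w$ need not be of this restricted type, so it is not automatic that one can always reach a given $z<w$ from $w$ by a chain of such steps. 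The paper resolves exactly this point with Lemma~\ref{L:P-Bruhat_structure}: for $w\in W^P$ and any $z<w$, there exists $\tilde y$ with $z<\tilde y\leq w$, $\ell(\tilde y)=\ell(z)+1$, and $z=r_\gamma\tilde y$ for some $\gamma\in\LInv^P(\tilde y)$. Its proof is itself nontrivial, going through a diamond/length-two-interval analysis (Lemma~\ref{L:length_2_interval}) and the rank-two pattern (flattening) map of Billera--Brenti. Without some argument of this kind, your induction on $\ell(w)-\ell(z)$ cannot get started in the backwards direction, so the bijection is not established. I would also note that the paper strengthens the induction by allowing $z\in W$ arbitrary (not just $z\in W^Q$), which is what makes the inductive appeal to Lemma~\ref{L:P-Bruhat_structure} go through cleanly; your sketch keeps $v'$ in $W^Q$, which would require an extra reduction step.
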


\begin{example}
Let $G$ of be type $A_3$ and $w=s_1s_3s_2$ with $P$ corresponding to $\{s_1,s_3\}\subset\Delta$.  In this case, we have $\Delta_w=\emptyset$ and hence $Q$ is the Borel subgroup $B$.  The left inversion set of $w$ is $$\LInv(w)=\{\alpha_1,\alpha_3,\alpha_1+\alpha_2+\alpha_3\},$$ and the $\alpha$-string shifting action of $\sigma_{\alpha}$ is always trivial.  The bijection between the eventual Peterson translates of $w$ and the $T$-fixed points of $X_w^Q$ is given in the following table.

$$
\begin{tabular}{|c|c|c|}
\hline
$T$-fixed pts of $X_w^Q$ & \multicolumn{2}{c|}{Eventual Peterson translates of $X_w^P$,\ $(\widetilde{v},N)$} \\ \hline
v & $\widetilde{v}$ &  $N$ \\ \hline
$s_3s_1s_2$ &$s_3s_1s_2$    &$\{\alpha_1,\alpha_3,\alpha_1+\alpha_2+\alpha_3\}$  \\
$s_3s_2$    & $s_3s_2$      &$\{-\alpha_1,\alpha_3,\alpha_2+\alpha_3\}$ \\
$s_1s_2$    &$s_1s_2$       &$\{\alpha_1,-\alpha_3,\alpha_1+\alpha_2\}$ \\
$s_2$       &$s_2$          &$\{-\alpha_1,-\alpha_3,\alpha_2\}$  \\
$s_1s_3$    &$e$            &$\{-(\alpha_1+\alpha_2),-(\alpha_2+\alpha_3),-(\alpha_1+\alpha_2+\alpha_3)\}$ \\
$s_3$       &$e$            &$\{-\alpha_2,-(\alpha_2+\alpha_3),-(\alpha_1+\alpha_2+\alpha_3)\}$\\
$s_1$       &$e$            &$\{-\alpha_2,-(\alpha_1+\alpha_2),-(\alpha_1+\alpha_2+\alpha_3)\}$ \\
$e$         &$e$            &$\{-\alpha_2,-(\alpha_1+\alpha_2),-(\alpha_2+\alpha_3)\}$ \\ \hline
\end{tabular}
$$

We further illustrate the sequential structure of eventual Peterson translation with the following graph.
$$\begin{tikzpicture}[scale=1.5,decoration={markings,mark=at position 0.77 with {\arrow{>}}}]
\def \radius {28pt}
\draw[thick,postaction={decorate}] (0,4)--(-2,2) node [midway, fill=white]{$r_{1}$};
\draw[thick,postaction={decorate}] (0,4)--(2,2)node [midway, fill=white]{$r_3$};
\draw[thick,postaction={decorate}] (0,4)--(0,2.5)node [midway, fill=white]{$r_{1,2,3}$};
\draw[thick,postaction={decorate}](-2,2)--(0,0)node [midway, fill=white]{$r_{3}$};
\draw[thick,postaction={decorate}](-2,2)--(-0.5,2)node [midway, fill=white]{$r_{2,3}$};
\draw[thick,postaction={decorate}](2,2)--(0,0)node [midway, fill=white]{$r_{1}$};
\draw[thick,postaction={decorate}](2,2)--(0.5,2)node [midway, fill=white]{$r_{1,2}$};
\draw[thick,postaction={decorate}](0,0)--(0,1.5)node [midway, fill=white]{$r_{2}$};
\draw[dashed, thick](0,1.5)--(0.5,2)--(0,2.5)--(-0.5,2)--(0,1.5);
\node[fill=white] at (0,4) {$s_1s_3s_2$};
\node[fill=white] at (-2,2) {$s_3s_2$};
\node[fill=white] at (0,0) {$s_2$};
\node[fill=white] at (2,2) {$s_1s_2$};
\node[fill=white] at (0,2.5) {$s_1s_3$};
\node[fill=white] at (-0.5,2) {$s_3$};
\node[fill=white] at (0,1.5) {$e$};
\node[fill=white] at (0.5,2) {$s_1$};
\end{tikzpicture}$$
The vertices are the $T$-fixed points of the Nash blow-up, directed paths from $w=s_1s_3s_2$ correspond to eventual Peterson translates, and the edge labels show the reflections by which the translation occurs.  For notational simplicity we denote the reflection $r_{\alpha}$ using the indices of the simple roots that appear in $\alpha$.  (For example $r_{1,2}:=r_{\alpha_1+\alpha_2}$.)  Note that if two paths terminate at the same vertex, then the corresponding eventual Peterson translates are the same.  The dashed edges connect the $T$-fixed points over the identity $eP$.
\end{example}

\subsection{Proof of Theorem \ref{T:main2}}

Before we prove Theorem \ref{T:main2}, we need the following lemmas about Bruhat order on $W$.  Recall that if a Bruhat interval $[z,x]$ has length two, then there are exactly two elements $y, \tilde y$ for which $z<y,\tilde y<x$ \cite[Lemma 2.7.3]{BB}.

\begin{lemma}\label{L:length_2_interval}
Let $[z,x]=\{z<y,\tilde y<x\}$ be a length two Bruhat interval in $W$ such that $z=r_\alpha y=r_\gamma \tilde y$ and $x = r_\beta y=r_\delta \tilde y$ for some (not necessarily simple) reflections $r_\alpha,r_\beta,r_\delta,r_\gamma$.
$$\begin{tikzpicture}
\draw[thick](0,0)--(1,1)--(0,2)--(-1,1)--(0,0);
\node at (-0.75,0.25){$r_{\alpha}$};
\node at (-0.75,1.75){$r_{\beta}$};
\node at (0.75,0.25){$r_{\gamma}$};
\node at (0.75,1.75){$r_{\delta}$};
\node[fill=white] at (0,2) {$x$};
\node[fill=white] at (-1,1) {$y$};
\node[fill=white] at (0,0) {$z$};
\node[fill=white] at (1,1) {$\tilde y$};
\end{tikzpicture}$$
Then $r_\delta$ and $r_\gamma$ are in the parabolic subgroup generated by $r_\alpha$ and $r_\beta$.
\end{lemma}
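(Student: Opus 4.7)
The plan is to compute the four cover reflections explicitly from a reduced expression for $x$ and then read off the claim directly.  Fix a reduced expression $x = s_{a_1}\cdots s_{a_k}$ with $k = \ell(x) = \ell(z)+2$.  By the subword property of Bruhat order, $z$ is obtained from this expression by deleting the letters at some two positions $i<j$, and the two middle elements of $[z,x]$ are the corresponding one-letter deletions $y = s_{a_1}\cdots\widehat{s_{a_j}}\cdots s_{a_k}$ and $\tilde y = s_{a_1}\cdots\widehat{s_{a_i}}\cdots s_{a_k}$.  Since the lemma is symmetric under swapping $y\leftrightarrow \tilde y$, we are free to label the middles so that $y$ deletes the later position $j$ and $\tilde y$ deletes the earlier position $i$.

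Recall the standard formula: if a Bruhat cover $w\lessdot w'$ arises by deleting position $p$ from a reduced expression $w' = s_{b_1}\cdots s_{b_m}$, then the cover reflection $w'w^{-1}$ equals $(s_{b_1}\cdots s_{b_{p-1}})\, s_{b_p}\, (s_{b_1}\cdots s_{b_{p-1}})^{-1}$.  Set $P := s_{a_1}\cdots s_{a_{i-1}}$, $M := s_{a_{i+1}}\cdots s_{a_{j-1}}$, $t := s_{a_i}$, and $t' := s_{a_j}$.  Applying the formula yields $r_\alpha = PtP^{-1}$ (cover $z\lessdot y$ at position $i$ of the word for $y$, whose prefix is $P$ since $i<j$), $r_\delta = PtP^{-1}$ (cover $\tilde y\lessdot x$ at position $i$ of $x$, same prefix $P$), $r_\beta = PtMt'M^{-1}tP^{-1}$ (cover $y\lessdot x$ at position $j$ of $x$, prefix $PtM$), and $r_\gamma = PMt'M^{-1}P^{-1}$ (cover $z\lessdot \tilde y$ at position $j{-}1$ of the word for $\tilde y$, whose prefix is $PM$ because $s_{a_i}$ has been removed).

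From these formulas, $r_\alpha = r_\delta$ is immediate, and a one-line calculation gives $r_\alpha r_\gamma r_\alpha = (PtP^{-1})(PMt'M^{-1}P^{-1})(PtP^{-1}) = PtMt'M^{-1}tP^{-1} = r_\beta$; equivalently $r_\gamma = r_\alpha r_\beta r_\alpha$.  Both $r_\gamma$ and $r_\delta = r_\alpha$ therefore lie in $\langle r_\alpha, r_\beta\rangle$, as claimed.  We do not expect a serious obstacle in this plan: the whole argument is reduced-expression bookkeeping, and the one conceptual point is to notice that the two covers attached to position $i$ (namely $z\lessdot y$ read off the word for $y$, and $\tilde y\lessdot x$ read off the word for $x$) share the prefix $P$, forcing $r_\alpha = r_\delta$; once that symmetry is identified, the identity $r_\beta = r_\alpha r_\gamma r_\alpha$ follows from one line of algebra.
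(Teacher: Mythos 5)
Your approach is genuinely different from the paper's: you attempt to read all four cover reflections off a single reduced word for $x$ by explicit subword bookkeeping, whereas the paper invokes the Billey--Braden pattern map to the reflection subgroup $W'=\langle r_\alpha,r_\beta\rangle$. If your reduction to explicit formulas could be made to work, it would give a nicely concrete and elementary proof; but as written there is a real gap in the very first step.

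The problem is the assertion that, having found positions $i<j$ in a reduced expression $x = s_{a_1}\cdots s_{a_k}$ whose deletion gives a reduced word for $z$, the two one-letter deletions are automatically the two middle elements of $[z,x]$.  This is false in general.  Take $W=S_3$, $x=s_1s_2s_1$, $z=s_1$, so $[z,x]=\{s_1,\, s_1s_2,\, s_2s_1,\, s_1s_2s_1\}$ with middles $y=s_1s_2$ and $\tilde y=s_2s_1$.  Using the reduced expression $s_1s_2s_1$, the only pairs of positions whose deletion yields a reduced word for $z$ are $\{1,2\}$ and $\{2,3\}$.  In either case one of the two one-letter deletions is $s_1s_1=e$, which has length $0$, not $\ell(x)-1=2$; so that deletion is not reduced and does not give a middle element of $[z,x]$.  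Consequently the formulas you wrote for $r_\delta$ and $r_\gamma$ do not compute the cover reflections of the actual $\tilde y$ for this choice.  (Concretely, for $i=2,j=3$ your formula gives $r_\delta = PtP^{-1} = s_1s_2s_1 = r_\alpha$, but the true $r_\delta$, the cover reflection $x\tilde y^{-1}$ with $\tilde y=s_2s_1$, is $s_1$.) The same phenomenon occurs in the dihedral group $I_2(4)$ with $x=s_1s_2s_1s_2$ and $z=s_1s_2$: no pair of deleted positions in the reduced word $s_1s_2s_1s_2$ yields both middles.

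What you actually need is the existence of \emph{some} reduced expression for $x$ and \emph{some} pair $i<j$ for which both one-letter deletions are reduced (and hence equal $y$ and $\tilde y$).  This is true in the two examples above -- for instance, using $s_2s_1s_2$ and deleting $\{1,3\}$ in the $S_3$ case, or using $s_2s_1s_2s_1$ and deleting $\{1,4\}$ in the $I_2(4)$ case -- but it is not a consequence of the subword property alone, and your proof gives no argument for it.  Establishing this existence claim is the entire content of the lemma in disguise: once it holds, your prefix-conjugation computation ($r_\alpha=r_\delta=PtP^{-1}$, $r_\gamma=PMt'M^{-1}P^{-1}$, $r_\beta=r_\alpha r_\gamma r_\alpha$) is correct and finishes the proof.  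So you should either prove this compatibility-of-subwords lemma directly, or switch to an argument that avoids the choice of a global reduced word, as the paper does with the pattern/flattening map.
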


\begin{proof}
Define the parabolic subgroup $W':=\langle r_\alpha, r_\beta\rangle$ with induced Bruhat order $\leq'$ and pattern map $\phi:W\rightarrow W'$ defined in \cite[Section 2]{BB03} (This map appears as the flattening map in \cite{BP05}).  Since $r_\beta r_\alpha=r_\delta r_\gamma$, it suffices to show $r_\delta\in W'$.  Theorem 2 of \cite{BB03} implies that
$\phi(x)=r_\beta\phi(y)$ and $\phi(y)=r_\alpha\phi(z).$
Since $W'$ is a rank 2 Coxeter group, we have that $\phi(z)\leq'\phi(y)\leq'\phi(x)$ and hence there exists $y'\in W'$ such that $\phi(z)\leq'y'\leq'\phi(x)$ with $\phi(y)\neq y'$.  Writing $y'=r'\phi(x)=\phi(r'x)$ for some $r'\in W'$ implies $z\leq r'x\leq x$.  Since $r'x\neq y$, we must have that $r'x=\tilde y=r_\delta x$ and hence $r_\delta\in W'$.
\end{proof}

\begin{lemma}\label{L:P-Bruhat_structure}
Suppose $w\in W^P$ and $z<w$.  Then there exists $\tilde y\leq w$ with $z=r_\gamma \tilde y$ for $\gamma\in \LInv^P(\tilde y)$ and $\ell(\tilde y)=\ell(z)+1$.
\end{lemma}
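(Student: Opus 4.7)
The plan is to translate the statement into a purely combinatorial push-down argument along saturated Bruhat chains from $z$ to $w$. First, writing $\tilde y = r_\gamma z$, a direct calculation gives $\tilde y^{-1}(\gamma) = -z^{-1}(\gamma)$, so the condition $\gamma \in \LInv^P(\tilde y)$ is equivalent to $z^{-1}(\gamma) \in R^+ \setminus R_L^+$. Since $\tilde y = z \cdot r_{z^{-1}(\gamma)}$ and $r_{z^{-1}(\gamma)} \in W_P$ iff $z^{-1}(\gamma) \in R_L$, this amounts to $\tilde y \notin zW_P$. So the task reduces to producing a cover $z \lessdot \tilde y \leq w$ with $\tilde y \notin zW_P$.

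Next, let $v$ be the minimal coset representative of $zW_P$. Order-preservation of the projection $W \to W^P$ gives $v \leq w$, and since $v = w$ would force $z \geq w$, we have $v < w$. Fix a saturated chain $z = u_0 \lessdot u_1 \lessdot \cdots \lessdot u_k = w$ and let $i \geq 1$ be the smallest index with $u_i \notin zW_P$; such $i$ exists because $u_k = w$ projects to $w \neq v$. The plan is to induct on $i$; the base case $i=1$ is handled directly by $\tilde y = u_1$.

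For the inductive step $i \geq 2$, apply Lemma \ref{L:length_2_interval} to the length-two interval $[u_{i-2}, u_i]$ to produce an alternate intermediate element $u''$ with $u_{i-2} \lessdot u'' \lessdot u_i$ and $u'' \neq u_{i-1}$. Let $\alpha, \beta \in R^+$ be the cover roots of $u_{i-2} \lessdot u_{i-1}$ and $u_{i-1} \lessdot u_i$, and put $\delta := u_{i-2}^{-1}(\alpha)$, $\eta := u_{i-2}^{-1}(\beta)$. Then $\delta \in R_L^+$ because $u_{i-1} \in u_{i-2}W_P$, while $\eta \notin R_L$: otherwise $u_{i-1}^{-1}(\beta) = r_\delta(\eta)$ would still lie in $R_L$ (as $r_\delta \in W_P$ preserves $R_L$), forcing $u_i \in u_{i-1}W_P = zW_P$. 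By the Lemma, the cover reflection $r_{\alpha'}$ of $u_{i-2} \lessdot u''$ lies in $\langle r_\alpha, r_\beta\rangle$, so the positive root $\alpha'$ sits in the rank-two subsystem spanned by $\alpha, \beta$. Since $R$ is reduced, its only roots proportional to $\alpha$ are $\pm\alpha$, and $\alpha' \neq \alpha$ then forces $\alpha' = c\alpha + d\beta$ with $d \neq 0$; hence $u_{i-2}^{-1}(\alpha') = c\delta + d\eta$ has a nonzero component outside $\mathrm{span}(\Delta_L)$, so it lies outside $R_L = R \cap \mathrm{span}(\Delta_L)$. Thus $u'' \notin u_{i-2}W_P = zW_P$, and replacing $u_{i-1}$ by $u''$ in the chain yields a new saturated chain whose first deviation from $zW_P$ occurs at position $i-1$; the inductive hypothesis then completes the argument. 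The main obstacle, as I see it, is this rank-two calculation---verifying that the Lemma~\ref{L:length_2_interval} swap always produces an intermediate $u''$ already outside $zW_P$; the remaining steps are routine bookkeeping with $W_P$-cosets and Bruhat covers.
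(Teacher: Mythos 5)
Your proof is correct and rests on the same core mechanism as the paper's: a diamond swap in a length-two Bruhat interval supplied by Lemma~\ref{L:length_2_interval}, followed by the linear-algebra check that the swapped cover's root has a nonzero component outside $\mathrm{span}(\Delta_L)$ and hence lies outside $R_L$. The only difference is in how the induction is packaged---you fix a saturated chain from $z$ to $w$ and push the position of the first departure from $zW_P$ down to $1$, whereas the paper inducts on $\ell(w)-\ell(z)$ by first raising $z$ one cover to $y$ and applying the inductive hypothesis to $y$---but the inductive step performs the identical diamond swap, so this is a presentational rather than substantive difference.
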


\begin{proof}
We will prove the lemma by induction on $\ell(w)-\ell(z)$.  In the base case where $\ell(w)-\ell(z)=1$, the lemma holds since $\LInv^P(w)=\LInv(w)$.

Suppose $\ell(w)-\ell(z)>1$.  By the definition of Bruhat order, we know that there exists $y\leq w$ with $y$ covering $z$, so $z=r_\alpha y$ for some $\alpha\in\LInv(y)$.  We may assume that $\alpha\not\in\LInv^P(y)$; otherwise we are finished.  By the inductive hypothesis, there exists $x\leq w$ with $x$ covering $y$ such that  $y=r_\beta x$ for some $\beta\in\LInv^P(x)$.  Now, by~\cite[Lemma 2.7.3]{BB} there exists a unique $\tilde y\neq y$ with $z<\tilde y<x$.  Let $\gamma$ and $\delta$ be the roots such that $\tilde y=r_\delta x=r_\gamma z$.  (See the diagram in Lemma \ref{L:length_2_interval}.)


Note that $\alpha\in y(R_L)$, while $\beta\not\in x(R_L)$.  Hence, $-\alpha=r_\alpha \alpha \in z(R_L)$, while $r_{\alpha}(-\beta)\not\in z(R_L)$.  By Lemma \ref{L:length_2_interval}, $r_\gamma$ is in the parabolic subgroup generated by $r_\alpha$ and $r_\beta$, which implies the root $\gamma$ is in the span of the roots $\alpha$ and $\beta$.  Furthermore, $-\gamma$ is not a multiple of $\alpha$, as $\tilde y\neq y$.  Therefore, $-\gamma\not\in z(R_L)$ as $r_\alpha(-\beta)$ is a linear combination of $\alpha$ and $\gamma$, $r_\alpha(-\beta)\not\in z(R_L)$, and the set $z(R_L)$ is closed under negation. Hence, $\gamma\not\in \tilde y(R_L) = r_\gamma z(R_L)$.  Since $\gamma\in\LInv(\tilde y)$, we have $\gamma\in\LInv^P(\tilde y)$.  
\end{proof}

Lemma \ref{L:P-Bruhat_structure} can be rephrased in terms of the $P$-Bruhat order defined by Bergeron and Sottile.  It states that, if $w\in W^P$ and $z\leq w$ in ordinary Bruhat order, then $z\leq w$ in $P$-Bruhat order.  If $P$ is a cominuscule parabolic in type A or C, the lemma follows from concrete descriptions of $P$-Bruhat order found respectively in \cite[Theorem 1.1.2]{BS98} and \cite[Proposition 2.5]{BS02}.

\begin{proof}[Proof of Theorem \ref{T:main2}]
Since $P$ is cominuscule, for any roots $\alpha,\beta\in R^-\setminus R^-_L$, we have that $\alpha+\beta\not\in R^-\setminus R^-_L$.  Hence each $\alpha$-string in $z(R^-\setminus R^-_L)$ consists of only one root for any $z\in W$ and $\alpha\in z(R^-\setminus R^-_L)$.  Therefore, $\sigma_\alpha(z,M)=M$ for any $M\subseteq z(R^-\setminus R^-_L)$, and $\tau_\alpha(z,M)=(\widetilde{r_\alpha z}, r_\alpha(M))$.

Let $E:=w^{-1}(\LInv(w))=(R^-\setminus R_L^-)\cap w^{-1}(R^+)$, and define the map
$$T:\{z\in W^Q\mid z\leq w\}\rightarrow \{(v,M)\mid v \in W^P\ \text{and}\ M\subseteq v(R^-\setminus R_L^-)\}$$
by $T(z):=(\widetilde{z},z(E))$, where $\widetilde{z}$ is the minimal length representative of the coset $zW_P$.  We show that $T$ is an injection, that $T$ maps into the codomain as claimed, and that its image coincides with the set of eventual Peterson translates of $w$.


To show that $T$ is an injection, it suffices to show that the stabilizer of $E$ under the action of $W$ on $R$ is $W_Q$.  This fact holds since $Q$ is the stabilizer of $\bigoplus_{\alpha\in E} \mathfrak{g}_\alpha$ for the action of $L$ on $\Gr(d,\mathfrak{g}/\mathfrak{p})$ and the action of $W$ on $R\setminus R_L$ is defined in terms of the action of $G$ on $\mathfrak{g}/\mathfrak{p}$.  

To show that $T$ maps into the codomain as claimed, we need to show that $z(E)\subseteq v(R^-\setminus R_L^-)$ whenever $v$ is the minimal element of $zW_P$.  Note that $v=zu$ for some $u\in W_P$ and $u^{-1}(R^-\setminus R^-_L)=R^-\setminus R^-_L$.   Since $E\subseteq R^-\setminus R^-_L$, we have
$$z(E)=vu^{-1}(E)\subseteq vu^{-1}(R^-\setminus R^-_L)=v(R^-\setminus R^-_L).$$

Now we show by induction that every eventual Peterson translate of $w$  is of the form $T(z)$ for some $z\in W^Q$ with $z\leq w$.  The base case is where our eventual Peterson translate is $(w,w(R^-\setminus R^-_L)\cap R^+)$, which is $T(w)$.  Let $(a,M)$ be an eventual Peterson translate.  By the inductive hypothesis, $(a,M)=\tau_\alpha(u, v(E))$ for some $v\in W^Q$ and $\alpha\in \LInv^P(v)$, where $u$ is the minimal element of $vW_P$.  However, $\tau_\alpha(u,v(E))=(\widetilde{r_\alpha u}, r_\alpha(v(E))$.  Since $\alpha\in R^+\setminus u(R^-_L)$, we have that $\widetilde{r_\alpha u}$ is the minimal element in $r_\alpha v W_P$.  Furthermore, $r_\alpha v= z u$, where $z\in W^Q$ and $u\in W_Q$, and $u(E)=E$ as previously argued.  Also, since $\alpha\in \LInv(v)$, we have $r_\alpha v\leq v$ and thus $z\leq v$.  Hence, $\tau_\alpha(u,v(E))=T(z)$ with $z\in W^Q$ and $z\leq w$.

Finally we show by induction (on $\ell(w)-\ell(z)$) that $T(z)$ is an eventual Peterson translate.  We strengthen the induction hypothesis by not requiring that $z\in W^Q$.  Again the base case is $z=w$, where $T(w)$ is an eventual Peterson translate by definition.  Given $z\in W$ with $z<w$, by Lemma \ref{L:P-Bruhat_structure}, there exists $y$ with $z<y\leq w$ and $\alpha\in\LInv^P(y)$ such that $z=r_\alpha y$.  By the induction hypothesis, $T(y)$ is an eventual Peterson translate, and $T(z)=\tau_\alpha(T(y))$.  Hence $T(z)$ is also an eventual Peterson translate.
\end{proof}

\section{Specifics for Grassmannians}\label{S:type_A_case}

In type A, every maximal parabolic $P\subseteq G$ is cominuscule.   If the simple root associated to $P$ is $\alpha_k$, then the variety $G/P$ is the Grassmannian parameterizing $k$-planes in some fixed $\mathbb{C}^n$.  In this section, we give a combinatorial description of the Nash blow-up as a configuration space of subspaces of $\mathbb{C}^n$ and describe when the Nash blow-up is actually smooth.   We also show how the Nash blow-up relates to the Zelevinsky resolution of Grassmannian Schubert varieties.



\subsection{Schubert varieties as configuration spaces}

For $G=GL_n(\mathbb{C})$, the flag variety $G/B$ can be described as the space parameterizing all complete flags in $\mathbb{C}^n$.  Here, a {\bf complete flag} is a configuration $F_\bullet=(F_1\subset F_2\subset\cdots\subset F_{n-1})$ where each $F_i\subseteq \mathbb{C}^n$ is an $i$-dimensional subspace.  Let $\Fl(n)$ denote the variety of complete flags in $\mathbb{C}^n$.  Similarly, given a parabolic $P$, the variety $G/P$ parameterizes partial flags
$F_\bullet=(F_{a_1}\subset \cdots\subset F_{a_m})$, where $\dim F_{a_i}=a_i$, and $a_1<\cdots<a_m$ are the indices such that the simple reflection $s_{a_i}\notin W_P$.  Let $\Fla$ be the variety of partial flags in $\mathbb{C}^n$ corresponding to the sequence $\textbf{a}=(a_1<\cdots<a_m)$ (equivalently, corresponding to the parabolic $P$).


The Weyl group $W$ is the symmetric group $S_n$, and a Schubert variety can be defined in terms of intersection conditions on the flags its points represent.  Fix an ordered basis $e_1,\ldots,e_n\in\mathbb{C}^n$, and let $E_i=\langle e_1,\ldots,e_i\rangle$.  Let $w=w(1)\cdots w(n)\in S_n$, and for each $i,j$ such that $1\leq i\leq n$ and $1\leq j\leq m$, let the {\bf rank number} be $$r_{i,a_j}(wW_P):=\#\{k\leq a_j\mid w(k)\leq i\}.$$  Note these numbers are independent of the choice of coset representative in $wW_P$.  As a configuration space, the Schubert variety

$$X_w^P=\{F_\bullet\in \Fla\mid \dim(E_i\cap F_{a_j})\geq r_{i,a_j}(wW_P)\}.$$


Some of these intersection conditions will imply others, so not all of the given intersection conditions are needed to define the Schubert variety.  For any $w\in S_n$, define the {\bf coessential set} as
$$\Coess(w):=\{(p,q)\mid w^{-1}(p)\leq q<w^{-1}(p+1)\quad\text{and}\quad w(q)\leq p<w(q+1)\}.$$

Up to a change of convention, the following is a lemma of Fulton \cite[Lemmas 3.10 and 3.14]{Fu92}.  


\begin{lemma}\label{L:Coess_Full}
Let $w\in S_n$.  Then the Schubert variety
$$X_w^B=\{F_\bullet\in \Fl(n)\mid \dim(E_p\cap F_q)\geq r_{p,q}(w),\ (p,q)\in\Coess(w)\}.$$
Furthermore, $\Coess(w)$ is the minimal set of intersection conditions defining $X_w^B$.
\end{lemma}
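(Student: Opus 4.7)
The plan is to prove two separate claims: (a) the rank conditions indexed by $\Coess(w)$ suffice to cut out $X_w^B$, and (b) no single one of them can be omitted. This is a mild rephrasing of Fulton's argument in \cite{Fu92}.

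For (a), begin with the standard description of $X_w^B$ as the subvariety of $\Fl(n)$ where $\dim(E_p\cap F_q)\ge r_{p,q}(w)$ holds for all $1\le p,q\le n$. The strategy is to show that every condition with $(p,q)\notin\Coess(w)$ is implied by conditions at coessential neighbors. The basic tools are the elementary linear-algebra inequalities
$$\dim(E_{p-1}\cap F_q)\le \dim(E_p\cap F_q)\quad \text{and}\quad \dim(E_{p+1}\cap F_q)\le \dim(E_p\cap F_q)+1,$$
together with their analogues in the $q$-coordinate. If $(p,q)\notin\Coess(w)$, then by definition at least one of four corner conditions fails. For example, if $w^{-1}(p+1)\le q$ then $r_{p+1,q}(w)=r_{p,q}(w)+1$, and the rank condition at $(p+1,q)$ combined with the second inequality above implies the one at $(p,q)$; if instead $w^{-1}(p)>q$ then $r_{p-1,q}(w)=r_{p,q}(w)$ and monotonicity gives the reduction. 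The remaining two cases, with $p$ and $q$ interchanged, are symmetric. Iterating, any non-coessential rank condition is implied by a chain of coessential ones.

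For (b), the approach is to exhibit, for each $(p^*,q^*)\in\Coess(w)$, a flag satisfying every other coessential condition of $w$ but strictly failing the one at $(p^*,q^*)$. Since $\Fl(n)$ is partitioned into Schubert cells $BvB/B$, and every flag in such a cell has rank function $r_{p,q}(v)$, this reduces to finding a permutation $v$ with $r_{p,q}(v)\ge r_{p,q}(w)$ for all $(p,q)\in\Coess(w)\setminus\{(p^*,q^*)\}$ and $r_{p^*,q^*}(v)<r_{p^*,q^*}(w)$ (the latter automatically gives $v\not\le w$). I would construct $v$ by a local modification of the one-line notation of $w$ near $(p^*,q^*)$; the coessential condition pins the local structure as $w^{-1}(p^*)\le q^*<w^{-1}(p^*+1)$ and $w(q^*)\le p^*<w(q^*+1)$, which allows a combinatorial swap to isolate the change in rank to $(p^*,q^*)$.

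The main obstacle is the bookkeeping in part (b): since several coessential points of $w$ may share a row or column coordinate, a naive swap (such as left-multiplication by the simple reflection $s_{p^*}$) can disturb the rank at several coessential points simultaneously. The right choice of $v$ may require a transposition involving non-adjacent values or positions, and verifying that the disturbance is confined to $(p^*,q^*)$ is the technical heart of the argument.
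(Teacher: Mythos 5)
The paper does not give its own proof; it cites Fulton's Lemmas 3.10 and 3.14 \cite{Fu92}, so the relevant question is whether your sketch would actually deliver both statements in the coessential convention.

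For part (a), your reduction is essentially Fulton's and is correct: each of the four defining inequalities of $\Coess(w)$ controls one of the four moves (increase or decrease $p$ or $q$ by one), and one checks that the chain of implications terminates, either at a coessential box or at a boundary box where the rank condition is vacuous because $r_{p,q}(w)\le\max(0,p+q-n)$.

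Part (b) has a genuine gap, and it sits precisely at what you call the ``technical heart.'' Reducing to finding a permutation $v$ with $r_{p^*,q^*}(v)<r_{p^*,q^*}(w)$ while $r_{p,q}(v)\ge r_{p,q}(w)$ at the other coessential boxes is a valid step, since each flag lies in a Schubert cell on which the rank numbers are constant. But you never produce such a $v$, and the natural local modification you first mention does genuinely fail. Concretely, take $w=2413$; then $\Coess(w)=\{(2,1),(2,3)\}$ with $r_{2,1}(w)=1$ and $r_{2,3}(w)=2$. The value swap at $(p^*,q^*)=(2,1)$ gives $v=s_{2}w=3412$, for which $r_{2,1}(v)=0$ as desired but also $r_{2,3}(v)=1<2$, so the other coessential condition is destroyed as well. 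Valid choices of $v$ exist (for instance $v=4213$ or $v=3124$), but they are not obtained from $w$ by the obvious transposition, and one can equally exhibit permutations (such as $w=3142$, with $\Coess(w)=\{(1,2),(3,2)\}$) for which the position swap $ws_{q^*}$ fails. Producing the correct flag or permutation for an arbitrary coessential box is exactly the content of Fulton's Lemma 3.14, and that construction is not in your proposal.
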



Given $w\in S_n$ and a parabolic subgroup $P$, let $v_P(w)$ be the {\em maximal} coset representative in $wW_P$.  If $s_q\in W_P$, then $v_P(w)(q)>v_P(w)(q+1)$.  This inequality implies $s_q\notin W_P$ for every $(p,q)\in\Coess(v_P(w))$.  Hence all the conditions imposed by $\Coess(v_P(w))$ relate to subspaces in a partial flag parameterized by $G/P$.  In particular, these conditions define $X_w^P$.  Note that the map $\pi: X^B_{v_P(w)}\rightarrow X_w^P$ is the $B/P$-fiber bundle where the the fiber $\pi^{-1}(x)$ is the collection of all the complete flags that complete the partial flag represented by $x\in X_w^P$.  We also note that if $w$ is a minimal coset representative, then $v_P(w)=ww_0^P$ where $w_0^P$ denotes the longest element of $W_P$.


\subsection{Grassmannian Schubert varieties and their Nash blow-ups}

We now describe the coessential set of $v_P(w)$ when $w$ is a Grassmannian permutation with unique descent at $k$ and $P$ is the maximal parabolic generated by $\Delta_{L}=\{\alpha_i\mid i\neq k\}$.  We denote the corresponding Grassmannian by  $\Gr(k,n)$.

\begin{lemma}\label{L:Coess_Grass}
Let $w\in W^P$ be a Grassmannian permutation with unique descent at $k$.  Then
$$\Coess(v_P(w))=\{(i,k)\mid  \ w^{-1}(i)\leq k<w^{-1}(i+1)\}.$$
Furthermore, $r_{i,k}(v_P(w))=w^{-1}(i)$.  Hence
$$X_w^P=\{V\in \Gr(k,n)\mid \dim(V\cap E_i)=w^{-1}(i),\ (i,k)\in \Coess(v_P(w))\}.$$
\end{lemma}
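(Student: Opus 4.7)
The plan is to compute $v_P(w) = w w_0^P$ explicitly, analyze its ascent positions, and then directly unpack the definition of $\Coess(v_P(w))$.  Since $W_P$ is generated by $\{s_i : i \neq k\}$, we have $W_P = S_{\{1,\ldots,k\}} \times S_{\{k+1,\ldots,n\}}$, and its longest element $w_0^P$ reverses each of these two blocks in one-line notation.  Multiplying on the right therefore gives $v_P(w)(i) = w(k+1-i)$ for $1 \le i \le k$ and $v_P(w)(i) = w(n+k+1-i)$ for $k+1 \le i \le n$.  Because $w \in W^P$ has its unique descent at $k$, the values $w(1) < \cdots < w(k)$ and $w(k+1) < \cdots < w(n)$ are strictly increasing on each block, so $v_P(w)$ is strictly decreasing on $[1,k]$ and on $[k+1,n]$.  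In particular the only potential ascent of $v_P(w)$ is at position $q = k$.

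Any $(p,q) \in \Coess(v_P(w))$ must satisfy $v_P(w)(q) < v_P(w)(q+1)$, so the previous step forces $q = k$.  With $q = k$, the second defining inequality becomes $w(1) = v_P(w)(k) \le p < v_P(w)(k+1) = w(n)$.  It remains to translate the first inequality $v_P(w)^{-1}(p) \le k < v_P(w)^{-1}(p+1)$ into a condition on $w$.  The explicit formula for $v_P(w)$ inverts to
\[
v_P(w)^{-1}(p) = \begin{cases} k+1 - w^{-1}(p) & \text{if } w^{-1}(p) \le k, \\ n+k+1 - w^{-1}(p) & \text{if } w^{-1}(p) > k, \end{cases}
\]
and a short case analysis on whether $w^{-1}(p)$ and $w^{-1}(p+1)$ lie in $[1,k]$ or $[k+1,n]$ shows that $v_P(w)^{-1}(p) \le k < v_P(w)^{-1}(p+1)$ is equivalent to $w^{-1}(p) \le k < w^{-1}(p+1)$.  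Under this equivalent condition, $p \in \{w(1),\ldots,w(k)\}$ and $p+1 \in \{w(k+1),\ldots,w(n)\}$, so $p \ge w(1)$ and $p+1 \le w(n)$; thus the second inequality $w(1) \le p < w(n)$ is automatic.  This proves the claimed description of $\Coess(v_P(w))$.

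For the rank number, $r_{i,k}(v_P(w)) = \#\{j \le k : v_P(w)(j) \le i\} = \#\{\ell \le k : w(\ell) \le i\}$ after the substitution $\ell = k+1-j$.  Since $w$ is strictly increasing on $[1,k]$ and, for coessential $(i,k)$, we have $w^{-1}(i) \le k$, this count equals $w^{-1}(i)$.  The configuration space description of $X_w^P$ then follows from Lemma \ref{L:Coess_Full} applied to the complete flag Schubert variety $X_{v_P(w)}^B$, together with the $B/P$-fibration $X_{v_P(w)}^B \to X_w^P$ discussed prior to the lemma: because every coessential pair has second coordinate $k$, the rank conditions only constrain the $k$-dimensional subspace in the partial flag.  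The main technical step is the case analysis equating $v_P(w)^{-1}(p) \le k < v_P(w)^{-1}(p+1)$ with $w^{-1}(p) \le k < w^{-1}(p+1)$; everything else is routine bookkeeping once the explicit form of $v_P(w)$ is in hand.
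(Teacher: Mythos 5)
Your proof is correct and follows essentially the same strategy as the paper's: the only ascent of $v_P(w)$ is at $q=k$, the first coessential inequality on $v_P(w)^{-1}$ is equivalent to the stated condition on $w^{-1}$, the second coessential inequality at $q=k$ is automatic, and the rank number reduces to $w^{-1}(i)$. You are somewhat more explicit (writing out $v_P(w)=ww_0^P$ and its inverse block-by-block, and doing the case analysis), whereas the paper asserts the equivalence $v_P(w)^{-1}(i)\leq k<v_P(w)^{-1}(i+1)\Leftrightarrow w^{-1}(i)\leq k<w^{-1}(i+1)$ without detail and derives the automatic second inequality from descents of $v_P(w)$ rather than from monotonicity of $w$ on the two blocks — both routine bookkeeping either way.
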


\begin{proof}
Note that if either $w(q)<w(q+1)$ or $v_P(w)(q)>v_P(w)(q+1)$, then $q=k$ since $w$ is a Grassmannian permutation.  We have
$$v_P(w)^{-1}(i)\leq k<v_P(w)^{-1}(i+1)\quad \mbox{if and only if}\quad w^{-1}(i)\leq k<w^{-1}(i+1).$$
In addition, any $i$ satisfying $v_P(w)^{-1}(i)\leq k<v_P(w)^{-1}(i+1)$ automatically satisfies $v_P(w)(k)\leq i<v_P(w)(k+1)$, since $v_P(w)$ has a descent at every index between $v_P(w)^{-1}(i)$ and $k$ and between $k+1$ and $v_P(w)^{-1}(i+1)$.

Note $r_{i,k}(v_P(w))=r_{i,k}(w)=w^{-1}(i)$.  The last statement follows by the discussion following Lemma \ref{L:Coess_Full}.



\end{proof}

For convenience, we let $p_1<\cdots< p_m$ denote the first factors in the coessential set, so $\Coess(v_P(w))=\{(p_i,k)\mid 1\leq i\leq m\}$.  We also let the rank numbers $r_i=r_{p_i,k}(v_P(w))=w^{-1}(p_i)$.  Recall from Theorem \ref{T:main1} that the Nash blow-up of $X_w^P$ is isomorphic to the Schubert variety $X_w^Q$ where $Q$ is the parabolic subgroup generated by the simple roots $\Delta_w\subseteq\Delta_{L}$.  In the case of Grassmannian Schubert varieties, we have that
$$\Delta_w=\{\alpha_j\mid w(j+1)=w(j)+1,\ j\neq k\}.$$
Hence, if $w$ is not the identity, then $\alpha_j\notin \Delta_w$ if and only if either:
\begin{enumerate}
\item  $w(j)\leq k$ and $w(j+1)>k$ or
\item $w(j)>k$ and $w(j+1)\leq k$.
\end{enumerate}
We will see that these inequalities mean either $j=w^{-1}(p_i)=r_i$ for some $p_i$ or $j=w^{-1}(p_i+1)-1=k+p_i-r_i$.


\begin{lemma}\label{L:Coess_Nash}
Let $w\in W^P$ be a Grassmannian permutation with unique descent at $k$, and let $Q$ be generated by $\Delta_w$ as described in Theorem \ref{T:main1}.  If $w$ is not the identity, then
$$\Coess(v_Q(w))=A\cup B,$$
where
$$A=\{(i,w^{-1}(i))\mid w^{-1}(i)<k< w^{-1}(i+1)\}$$
and
$$B=\{(i, w^{-1}(i+1)-1) \mid w^{-1}(i)< k+1<w^{-1}(i+1)\}.$$
In particular,
$$A=\left\{(p_i,r_i)\ \text{such that}\ \begin{cases}1\leq i\leq m & \text{if}\ w(k)=n \\ 1\leq i\leq m-1& \text{if}\ w(k)<n\end{cases}\right\}$$
and
$$B=\left\{(p_i,k+p_i-r_i)\ \text{such that}\ \begin{cases}1\leq i\leq m & \text{if}\ w(k+1)=1 \\ 2\leq i\leq m & \text{if}\ w(k+1)>1\end{cases}\right\}.$$
Furthermore, $r_{p_i,r_i}(v_Q(w))=r_i$, and $r_{p_i,k+p_i-r_i}(v_Q(w))=p_i$.


\end{lemma}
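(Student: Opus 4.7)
The plan is to compute $\Coess(v_Q(w))$ directly by describing $v := v_Q(w)$ as an explicit permutation and then reading off its ascents together with the unique $p$ each produces.

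First I make $v$ explicit. The parabolic $W_Q$ is generated by $\{s_j : w(j+1)=w(j)+1,\; j\neq k\}$, so $w_0^Q$ is the product of the longest elements of the connected components. These components partition $\{1,\ldots,n\}$ into blocks of consecutive positions $B_1=[a_1,b_1],\ldots,B_s=[a_s,b_s]$, each a maximal interval on which $w$ takes consecutive values, with position $k$ always being the right endpoint $b_r$ of some block $B_r$. Right-multiplying $w$ by $w_0^Q$ reverses $w$ within each block, so $v$ is strictly decreasing on each $B_t$ while taking the same set of values on $B_t$ as $w$ does; I write $I_t$ for the value interval of $B_t$ when $t\leq r$ and $J_{t-r}$ when $t>r$.

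Next I determine the ascents of $v$, since only ascents can appear as second coordinates of coessential corners. Within a block $v$ is decreasing, so ascents occur only at block boundaries $q=b_t$ (including the trivial $q=n$). At such a boundary $v(b_t)=w(a_t)$ and $v(b_t+1)=w(b_{t+1})$; for $t\neq r$ this is automatically an ascent because $B_t$ and $B_{t+1}$ lie on the same side of the descent and $w$ is monotone across their boundary. For $t=r$ the descent $w(k)>w(k+1)$ combined with the integer-interval structure of the blocks forces the value set $I_r$ to lie entirely above $J_1$ (two disjoint intervals of integers with $\max I_r>\min J_1$ must satisfy $J_1<I_r$), so $q=k$ is a descent rather than an ascent.

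Then at each ascent $q=b_t$ I pinpoint the unique $p$ with $(p,q)\in\Coess(v)$. Since $v$ and $w$ share the same block decomposition, $v^{-1}(x)$ and $w^{-1}(x)$ lie in the same block for every value $x$; hence $v^{-1}(p)\leq b_t<v^{-1}(p+1)$ amounts to requiring $p$ to lie in a block of position-index $\leq t$ while $p+1$ lies in a block of index $>t$. Combined with $v(b_t)\leq p<v(b_t+1)$ this determines $p$ uniquely and splits into three cases: for $t<r$, the corner is $(p_t,r_t)$, which lies in $A$ via $p_t=\max I_t$ and $r_t=b_t$; for $r<t<s$, the corner is $(p_l,w^{-1}(p_l+1)-1)\in B$, where $p_l$ is the maximum of the $I$-interval immediately below the value interval of $B_{t+1}$; and for $t=s$, the corner is $(n,n)$, which the $B$-formula captures exactly when $w(k)=n$.

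The main obstacle is the bookkeeping around the descent position $k$: the absence of an ascent at $q=k$ must simultaneously account for the exclusion of $(p_m,r_m)$ from $A$ when $w(k)<n$ (since then $r_m=k$ forces $w^{-1}(p_m)=k$, violating the strict inequality in the definition of $A$) and for the failure of the value $\min J_1-1$ to contribute a $B$-corner when $w(k+1)>1$ (its expected position $k$ is not an ascent), which corresponds to the lower bound $i\geq 2$ in the definition of $B$. Once these boundary cases are tracked, matching the coessential corners with the explicit formulas for $A$ and $B$ and translating into $(p_i,r_i)$-notation is a routine substitution.
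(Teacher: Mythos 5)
Your block-decomposition framework is a clean and genuinely enlightening way to organize the computation; in particular, it gives a transparent justification for the fact that $v_Q(w)(k)>v_Q(w)(k+1)$ (via the disjoint-integer-interval argument), a step the paper simply asserts. The paper's own proof works directly with ascent positions $q<k$, $q=k$, $q>k$ of $v_Q(w)$ and with the inequalities $w(j+1)\neq w(j)+1$, which amounts to the same analysis without naming the blocks explicitly; your version makes the combinatorics more visible. Your treatment of the ascent at a boundary $q=b_t$ and of the exclusion of $i=m$ from $A$ (when $w(k)<n$) and of $i=1$ from $B$ (when $w(k+1)>1$) is correct.

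There are two genuine gaps. First, your treatment of the last block $t=s$ is wrong. With the convention in force here (which the ``in particular'' clause of the lemma forces), $\Coess$ is supported on pairs $(p,q)$ with $p,q\leq n-1$, so the boundary $q=b_s=n$ never produces a coessential box; there is no pair $(n,n)$ to account for, and the sentence asserting ``for $t=s$ the corner is $(n,n)$, captured by the $B$-formula exactly when $w(k)=n$'' is spurious. As written, this makes your count of $B$-boxes off by one: the boxes of $B$ come exactly from the boundaries $b_{r+1},\dots,b_{s-1}$, and nothing at all comes from $b_s$. You can check this on the paper's Example~4.9 ($n=8$, $w=25713468$): there are seven blocks, $b_7=8=n$, and $\Coess(v_Q(w))$ has exactly five boxes, none at $q=8$. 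The fix is simply to restrict to $t\leq s-1$ and drop the $t=s$ discussion.

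Second, you do not verify the final assertions $r_{p_i,r_i}(v_Q(w))=r_i$ and $r_{p_i,\,k+p_i-r_i}(v_Q(w))=p_i$, dismissing them as ``routine substitution.'' These do require a short argument: for $(p,q)\in A$ one needs $v_Q(w)(j)\leq p$ for all $j\leq q$ (so the rank is $q$), and for $(s,t)\in B$ one needs $v_Q(w)(j)>s$ for all $j>t$ (so the rank is $s$). In your block language this follows because, for $t<r$, the values $v_Q(w)$ takes on positions $\leq b_t$ are exactly $I_1\cup\cdots\cup I_t$, all $\leq \max I_t=p_t$; a symmetric argument handles $B$. You should spell this out rather than leave it implicit, as the paper does.
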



\begin{proof}
Let $q<k$.  Then $v_Q(w)(q)<v_Q(w)(q+1)$ if and only if $w(q+1)\neq w(q)+1$.  Suppose $q$ satisfies either condition.  If $v_Q(w)^{-1}(p)\leq q <v_Q(w)^{-1}(p+1)$, then $p=w(q)$, since $v_Q(w)(j)\leq w(q)$ for all $j\leq q$.  Moreover, $v_Q(w)^{-1}(i+1)\leq q$ for all $i$ with $v_Q(w)^{-1}(q)\leq i<p$.  Since $w(q+1)\neq w(q)+1$, we have $w^{-1}(w(q)+1)>k>q$.  Hence for $q<k$, $(p,q)\in \Coess(v_Q(w))$ if and only if $(p,q)\in A$.

Since $w$ is not the identity and $w\in W^P$, we have $w(k)>w(k+1)$ and $v_Q(w)(k)>v_Q(w)(k+1)$.  Hence, $\Coess(v_Q(w))$ contains no elements $(a,b)$ where $b=k$.

Let $t>k$.  Similarly, we have $v_Q(w)(t)<v_Q(w)(t+1)$ if and only if $w(t+1)\neq w(t)+1$.  Suppose $t$ satisfies either condition.  If $v_Q(w)^{-1}(s)\leq t<v_Q(w)^{-1}(s+1)$, then $s+1=w(t+1)$, since $v_Q(w)(j)\geq w(t+1)$ for all $j>t$.  Moreover $v_Q(w)^{-1}(i)>t$ for all $i$ with $s+1\leq i\leq v_Q(w)(t+1)$.  Since $w(t+1)\neq w(t)+1$, we have $w^{-1}(w(t+1)-1))\leq k<t$.  Hence for $t>k$, $(s,t)\in\Coess(v_Q(w))$ if and only if $(s,t)\in B$.

If $w(k)=n$, then $(w^{-1}(k),k)\not\in\Coess(v_P(w))$, so $w^{-1}(p_i)<k$ and $(p_i,r_i)\in A$ for all $i$.  Otherwise, $w^{-1}(k)=p_m$, so $(p_m,r_m)\not\in A$.  Similarly, if $w(k+1)=1$, then $k+1<w^{-1}(p_i+1)$ and $(p_i,w^{-1}(p_i+1)-1\in B$ for all $i$.  Otherwise, $w^{-1}(p_1+1)=k+1$ and $(p_1,w^{-1}(p_1+1)-1)\not\in B$.  Also note that $w^{-1}(p_i+1)-1=k+p_i-r_i$, since $k<j<w^{-1}(p_i+1)$ if and only if $j>k$ and $w(j)\leq p_i$, and there are exactly $p_i$ indices such that $w(j)<p_i$, with $r_i$ of those indices being less than or equal to $k$.

For $(p,q)\in A$, we have that $w(j)\leq p$ for all $j\leq q$, and since $\alpha_q\not\in \Delta_w$, we also have $v_Q(w)(j)\leq p$ for all $j\leq q$.  Hence $r_{p,q}(v_Q(w))=q$.  For $(s,t)\in B$, we have $w(j)>s$ for all $j>t$ and $v_Q(w)(j)>s$ for all $j>t$.  Hence, if $v_Q(w)(j)\leq s$, then $j\leq t$.  Thus $r_{s,t}(v_Q(w))=s$.
\end{proof}

We now describe the Nash blow-up of a Grassmannian Schubert variety as a configuration space using Theorem \ref{T:main1} and Lemma \ref{L:Coess_Nash}.  For simplicity, we will assume $w\in W^P$ satisfies $w(k)=n$ and $w(k+1)=1$.

\begin{corollary}\label{C:type_A_Nashblowup}
Let $w\in W^P$ be a Grassmannian permutation with unique descent at $k$, coessential set $$\Coess(v_P(w))=\{(p_i,k)\mid 1\leq i\leq m\},$$ and rank numbers $r_i=r_{p_i,k}(v_P(w))=w^{-1}(p_i)$.
Further assume that $w(k)=n$ and $w(k+1)=1$.  Then the Nash blow-up of the Schubert variety $X_w^P$ is isomorphic to
$$X_w^Q=\{F_\bullet\in\Fl(r_1,\ldots,r_m,k,k+p_1-r_1,\ldots,k+p_m-r_m)\mid F_{r_i}\subseteq E_{p_i}\subseteq F_{k+p_i-r_i}\}.$$
\end{corollary}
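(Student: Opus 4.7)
The plan is to combine Theorem~\ref{T:main1}, which identifies the Nash blow-up with $X_w^Q$, with the coessential set computation of Lemma~\ref{L:Coess_Nash} and the general principle (Lemma~\ref{L:Coess_Full}, applied parabolically to $v_Q(w)$) that the coessential set supplies the minimal defining intersection conditions for a Schubert variety. Given Theorem~\ref{T:main1}, the work remaining is to realize $X_w^Q$ as the stated configuration space.

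First I would identify the partial flag variety $G/Q$ concretely. The parabolic $Q$ is generated by those $s_j$ with $\alpha_j \in \Delta_w$, so the flag dimensions recorded by $G/Q$ are the indices $j \in [1,n-1]$ with $\alpha_j \notin \Delta_w$. Unraveling the definition of $\Delta_w$ together with Lemma~\ref{L:Coess_Grass} shows these indices are exactly $\{r_1, \ldots, r_m\} \cup \{k\} \cup \{k+p_1-r_1, \ldots, k+p_m-r_m\}$. Under the hypotheses $w(k)=n$ and $w(k+1)=1$, I would verify that these indices assemble into a strictly increasing sequence
\[
r_1 < \cdots < r_m < k < k+p_1-r_1 < \cdots < k+p_m-r_m,
\]
so that $G/Q \cong \Fl(r_1, \ldots, r_m, k, k+p_1-r_1, \ldots, k+p_m-r_m)$. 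Monotonicity within each block follows from $w$ being increasing on $[1,k]$ and on $[k+1,n]$. The strict inequalities $r_m < k$ and $k < k+p_1-r_1$ are precisely where the boundary hypotheses enter: $w(k)=n$ forces $p_m < n$ and hence $r_m = w^{-1}(p_m) < k$, while $w(k+1)=1$ forces $p_1+1 \neq w(k+1)$ and hence $k+p_1-r_1 = w^{-1}(p_1+1)-1 > k$.

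Next I would apply Lemma~\ref{L:Coess_Nash} to read off the intersection conditions defining $X_w^Q$. Under both hypotheses its coessential set is $\{(p_i, r_i)\}_{i=1}^m \cup \{(p_i, k+p_i-r_i)\}_{i=1}^m$, with rank numbers $r_{p_i, r_i}(v_Q(w)) = r_i$ and $r_{p_i, k+p_i-r_i}(v_Q(w)) = p_i$. These translate directly to subspace containments: $\dim(E_{p_i} \cap F_{r_i}) \geq r_i$ is equivalent to $F_{r_i} \subseteq E_{p_i}$ since $\dim F_{r_i} = r_i$, and $\dim(E_{p_i} \cap F_{k+p_i-r_i}) \geq p_i$ is equivalent to $E_{p_i} \subseteq F_{k+p_i-r_i}$ since $\dim E_{p_i} = p_i$. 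Composing with the isomorphism of Theorem~\ref{T:main1} yields the claimed description.

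The substantive content is already carried by Theorem~\ref{T:main1} and Lemma~\ref{L:Coess_Nash}, so I do not anticipate a serious obstacle. The one point requiring real care is confirming that the boundary hypotheses $w(k)=n$ and $w(k+1)=1$ are exactly what force $r_m \neq k$ and $k+p_1-r_1 \neq k$, so that the listed flag dimensions are pairwise distinct and the partial flag variety in the conclusion is well-defined. The remainder is routine bookkeeping translating coessential rank inequalities into subspace containments.
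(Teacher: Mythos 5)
Your proposal is correct and matches the paper's intended (unwritten) argument exactly: the paper states this corollary immediately after Theorem~\ref{T:main1} and Lemma~\ref{L:Coess_Nash} precisely so that it follows by unwinding the coessential-set data into subspace conditions, as you do. The only small stylistic note is that the justification for $k+p_1-r_1>k$ could be stated more directly (the point is that $w^{-1}(p_1+1)\geq k+1$ always, with equality forcing $p_1+1=w(k+1)=1$, impossible), but your reasoning is sound.
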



The conditions that $w(k)=n$ and $w(k+1)=1$ in the corollary above are minor.  Indeed, if $w(k)<n$, then $r_m=k$ and hence $F_{r_m}=F_k\subset F_{p_m}=E_{p_m}$.  In this case, the condition that $F_{r_m}\subseteq E_{p_m}$ is not in the coessential set of $X_w^Q$, as it is forced by the condition $F_{p_m}=E_{p_m}$.  Note also that there is no choice for $F_{p_m}$.  If $w(k+1)>1$, then $p_1=r_1$ and $k+p_1-r_1=k$.  In this case, we require that $E_{p_1}=F_{p_1}$, and the condition that $E_{p_1}\subseteq F_k=F_{k+p_1-r_1}$ is not in the coessential set.

It is easy to see from this description that, for a generic point in $X_w^P$, the fiber of the Nash blow-up is a single point.  If $V\in X_w^P$ represents a generic point, then we will have $\dim(E_{p_i}\cap V)=r_i$ for all $i$, which implies $\dim(E_{p_i}+V)=k+p_i-r_i$.  Hence the unique point in the fiber will have $F_{r_i}=E_{p_i}\cap V$ and $F_{k+p_i-r_i}=E_{p_i}+V$.

It is well known that Schubert varieties in the Grassmannian $\Gr(k,n)$ correspond to partitions whose Young diagram is contained in a $k\times (n-k)$ rectangle.  We now show how the combinatorial data of the Nash blow-up can be read from the partition.  For any Grassmannian permutation $w\in W^P$, define the partition $\lambda(w)=(\lambda_1\geq\cdots\geq \lambda_k)$ by setting $\lambda_{k-i+1}:=w(i)-i$ for all $i\leq k$.  (Note that $\lambda(w)$ can have parts that are zero.)  We say an index $c$ is an {\bf inner corner} of $\lambda(w)$ if $\lambda_c>\lambda_{c+1}$.  By convention, $c=0$ is an inner corner if $\lambda_1<n-k$.  Rephrasing Lemma \ref{L:Coess_Grass} gives
$$\Coess(v_P(w))=\{(k-c+\lambda_{c+1},k)\mid c\mbox{ is an inner corner of $\lambda(w)$}\},$$
and $$r_{k-c+\lambda_{c+1},k}=k-c.$$
Each inner corner $c$ of $\lambda(w)$ corresponds to a coessential condition on the Nash blow-up of $X_w^P$.  In particular, the Schubert conditions describing $X_w^Q$ in Corollary \ref{C:type_A_Nashblowup} become
$$F_{k-c}\subseteq E_{k-c+\lambda_{c+1}}\subseteq F_{k+\lambda_{c+1}}.$$
In the next example, we demonstrate how the Young diagram of $\lambda(w)$ makes the data on the Nash blow-up relatively easy to calculate.




\begin{example}\label{Ex:431}
Let $n=8$, $k=3$ and $w=25713468$.  In this example, the coessential set $$\Coess(v_P(w))=\{(2,3), (5,3), (7,3)\},$$ which correspond to Schubert intersection conditions
$$\dim(E_2\cap V)\geq 1,\ \dim(E_5\cap V)\geq 2,\ \dim(E_7\cap V)\geq 3$$
for $V\in X_w^P\subseteq \Gr(3,8)$.

The partition $\lambda(w)=(4,3,1)$ has three inner corners at $c=2,1,0$ which correspond respectively to $(2,3), (5,3), (7,3)\in \Coess(v_P(w))$.  We label the lower boundary of the Young diagram of $\lambda(w)$ (given in Russian notation) with the indices $1,2,\ldots, 7$ and calculate the Nash blow-up by projecting from the inner corners to the lower boundary as follows:

\YRussian
$$\begin{tikzpicture}
\draw (0.04,0.35) node {\gyoung(;;;;;7,;;;;;,;;;;;)};
\Yfillcolour{yellow}
\Ylinethick{1pt}
\draw (0,0) node {\gyoung(3;4;5;6:\bullet,2;;:\bullet,1:\bullet)};
\draw[thick, dashed] (-2.10,-1)--(-0.60,0.32)--(0.72,-1);
\draw[thick, dashed] (-1.27,-1)--(0.37,0.64)--(2.01,-1);
\draw[thick, dashed] (-0.66,-1)--(1.02,0.64)--(2.66,-1);
\draw (-2.1,-1.3) node {$ 1$};\draw (-1.455,-1.27) node {$2$};\draw (-.81,-1.3) node {$3$};
\draw (0.88,-1.27) node {$4$};\draw (2.09,-1.27) node {$6$};\draw (2.775,-1.3) node {$7$};
\end{tikzpicture}$$

In this case, $\Delta_w=\{\alpha_5\}$, and $G/Q$ is the partial flag variety $\Fl(1,2,3,4,6,7)$.  By Corollary \ref{C:type_A_Nashblowup}, the Nash blow-up of $X_w^P$ is isomorphic to the Schubert variety $X_w^Q$ given by all partial flags $F_\bullet\in\Fl(1,2,3,4,6,7)$ satisfying
$$F_{1}\subseteq E_{2}\subseteq F_{4},\qquad F_{2}\subseteq E_{5}\subseteq F_{6},\qquad F_{3}\subseteq E_{7}\subseteq F_{7}.$$
Observe that we are in the degenerate case where $w(k)<n$.

\end{example}

\subsection{Smoothness of Nash blow-ups}

We can determine when $X_w^P$ has a smooth Nash blow-up (or, in other words, when $X_w^Q$ is smooth) using the work of Gasharov and Reiner in \cite{GR02}.

Given $w\in S_n$ and a parabolic $P$, a Schubert variety $X_w^P$ is {\bf defined by inclusions} if $r_{p,q}(v_P(w))=\min(p,q)$ for all $(p,q)\in\Coess(v_P(w))$.  This condition is equivalent to stating that all the essential conditions defining $X_w^P$ are of the form $E_p\subseteq F_q$ or $F_q\subseteq E_p$.  Note that, by the last statement of Lemma~\ref{L:Coess_Nash}, $X_w^Q$ is always defined by inclusions.

A permutation $w$ is {\bf covexillary} if we do not have both $p<p'$ and $q>q'$ for all pairs of boxes $(p,q),(p',q')\in\Coess(w)$.  Equivalently, if $(p,q),(p',q')\in\Coess(w)$ with $p<p'$, then $q\leq q'$, and if $q>q'$, then $p\geq p'$.

The following is \cite[Theorem 1.1]{GR02} due to Gasharov and Reiner.

\begin{theorem}\label{T:GR_thoerem}
Suppose $X_w^Q$ is defined by inclusions.  Then $X_w^Q$ is smooth if and only if $v_Q(w)$ is covexillary.
\end{theorem}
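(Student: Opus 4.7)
The plan is to prove the two directions of the equivalence separately by reducing to a known pattern-avoidance criterion for smoothness and then exploiting the rigidity imposed by the ``defined by inclusions'' hypothesis.  First, I would invoke the observation (used already in Section~4 via $\pi: X^B_{v_P(w)}\to X_w^P$) that for any parabolic $Q$, the map $X^B_{v_Q(w)}\to X_w^Q$ is a smooth $B/Q$-fiber bundle, so that $X_w^Q$ is smooth if and only if $X^B_{v_Q(w)}$ is smooth. By the Lakshmibai--Sandhya theorem, the latter holds if and only if $v_Q(w)$ avoids both $3412$ and $4231$.  Since a permutation is covexillary precisely when it avoids $3412$, the theorem reduces to the following purely combinatorial assertion: if $X_w^Q$ is defined by inclusions, then $v_Q(w)$ avoids $3412$ if and only if it avoids both $3412$ and $4231$.

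One direction is immediate.  For the substantive direction, I would assume $v_Q(w)$ is covexillary and suppose, for contradiction, that it contains a $4231$ pattern at some indices $i_1<i_2<i_3<i_4$.  Translating this pattern into rank data, it produces at least two coessential boxes $(p,q),(p',q')\in\Coess(v_Q(w))$ whose rank numbers $r_{p,q}(v_Q(w))$ and $r_{p',q'}(v_Q(w))$ must each equal $\min(p,q)$ or $\min(p',q')$ (the inclusion condition).  A case analysis on which side of each inclusion the rank falls produces either a genuine $3412$ pattern (contradicting covexillarity) or two coessential conditions that are mutually inconsistent with $w$ lying in $W^Q$, the latter being ruled out by the construction of $Q$.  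The combinatorial heart of the argument is that the inclusion condition forces each essential rank to be ``extremal,'' so there is no room to accommodate a $4231$ pattern once $3412$ is excluded.

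For the ``only if'' direction, one can either invoke Lakshmibai--Sandhya as above, or argue directly: given a $3412$ pattern in $v_Q(w)$ one exhibits a specific $T$-fixed point $uQ$ of $X_w^Q$ at which smoothness can be tested.  Using the configuration-space description from Corollary~\ref{C:type_A_Nashblowup} (with $Q$ in place of $P$), one sees that at such a point the relevant inclusion conditions $F_a\subseteq E_p\subseteq F_b$ create a nontrivial intersection of Schubert divisors; counting the excess in the Zariski tangent space via the Kumar criterion or directly from the defining equations yields singularity.

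The main obstacle is the combinatorial lemma in the second paragraph: precisely characterizing how the equalities $r_{p,q}(v_Q(w))=\min(p,q)$ interact with pattern occurrences in $v_Q(w)$.  This is delicate because the coessential set and the pattern data live at different levels (rank function versus one-line notation), and one must carefully track how the $W^Q$-condition on $w$ (which governs where descents of $v_Q(w)$ can occur) eliminates the residual $4231$ possibilities once $3412$ has been forbidden.  I expect the cleanest packaging is via the monotone staircase shape that covexillarity imposes on $\Coess(v_Q(w))$, from which the absence of a $4231$ pattern reads off essentially by inspection.
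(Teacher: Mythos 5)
Note first that the paper does not prove this statement: it is cited verbatim from Gasharov--Reiner \cite{GR02}, so there is no in-paper argument to compare against. Your reduction is sound and is in fact a clean alternative framing: since $v_Q(w)$ is the longest element of $wW_Q$, the projection $X^B_{v_Q(w)}\to X_w^Q$ is a fiber bundle with smooth fiber $Q/B$ (not $B/Q$ --- the paper has this typo too), so $X_w^Q$ is smooth iff $X^B_{v_Q(w)}$ is; Lakshmibai--Sandhya then converts this to $3412$- and $4231$-avoidance of $v_Q(w)$. You would also need to justify (or cite) that the paper's coessential-set definition of covexillary is equivalent to $3412$-avoidance, which is classical but is a real step.

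The genuine gap is the one you flag yourself: the assertion that ``defined by inclusions'' plus $3412$-avoidance forces $4231$-avoidance is the entire content of the theorem under your reduction, and the paragraph addressing it is a plan, not a proof. The proposed case analysis is not pinned down --- you do not identify which coessential box a $4231$ occurrence produces, why its rank must fail to be $\min(p,q)$, nor how the $W^Q$-condition enters. Without that lemma the argument is circular: one cannot invoke the theorem's conclusion to deduce the combinatorics. The ``direct'' version of the only-if direction is also vaguer than needed (Lakshmibai--Sandhya already gives it, so the excess-tangent-space sketch is superfluous and, as written, not a proof). In short, the scaffolding around Lakshmibai--Sandhya is correct and arguably more transparent than Gasharov--Reiner's original argument, but the load-bearing combinatorial claim remains unproved, so the proposal as a whole does not yet establish the theorem.
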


Given a permutation $w$, we say the box $(p,q)\in \Coess(w)$ is an {\bf inclusion box} if $r_w(p,q)=\min(p,q)$.  In the case where $w$ is Grassmannian with unique descent at $k$, the following are true:
\begin{enumerate}
\item  $(p_1,k)$ is an inclusion box of $\Coess(v_P(w))$ if and only if $w(k+1)\neq 1$,
\item  $(p_m,k)$ is an inclusion box if and only if $w(k)\neq n$, and
\item  $(p_i,k)$ is not an inclusion box if $1<i<m$.
\end{enumerate}
In terms of the partition $\lambda(w)$, an inner corner $c$ gives an inclusion box if and only if $\lambda_{c+1}=0$ or $c=0$.  Note that in Example \ref{Ex:431}, the box $(7,3)$ is an inclusion box while $(2,3)$ and $(5,3)$ are not.

\begin{proposition}
Let $w\in W^P$ be a Grassmannian permutation.  The Nash blow-up of $X_w^P$ is smooth if and only if $\Coess(v_P(w))$ has at most one box that is not an inclusion box.
\end{proposition}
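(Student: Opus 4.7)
The plan is to combine Theorem \ref{T:main1} with the Gasharov--Reiner criterion (Theorem \ref{T:GR_thoerem}).  By Theorem \ref{T:main1}, the Nash blow-up of $X_w^P$ is isomorphic to $X_w^Q$; by Lemma \ref{L:Coess_Nash}, $X_w^Q$ is defined by inclusions, so Theorem \ref{T:GR_thoerem} reduces smoothness of the Nash blow-up to the question of whether $v_Q(w)$ is covexillary.  The remainder of the proof translates this into the stated condition on $\Coess(v_P(w))$.

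Using the description $\Coess(v_Q(w)) = A \cup B$ from Lemma \ref{L:Coess_Nash}, I would first check that no pair of boxes both in $A$, or both in $B$, can violate the covexillary condition.  Within $A$, both coordinates of $(p_i, r_i)$ are strictly increasing in $i$ because $w$ restricted to $\{1, \ldots, k\}$ is increasing.  Within $B$, the coordinates $(p_i, k + p_i - r_i)$ are also strictly increasing in $i$, since consecutive coessential values $p_i < p_{i+1}$ must be separated by at least one second-block value, giving $p_{i+1} - p_i > r_{i+1} - r_i$.

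Next comes the mixed $A$--$B$ case.  The membership conditions in Lemma \ref{L:Coess_Nash} yield $r_i \leq k - 1$ for any $(p_i, r_i) \in A$ and $k + p_j - r_j \geq k + 1$ for any $(p_j, k + p_j - r_j) \in B$.  Consequently, if the $B$-box has strictly smaller first coordinate than the $A$-box, its second coordinate strictly exceeds that of the $A$-box, producing an actual covexillary violation; in the opposite orientation both coordinates increase from the $A$-box to the $B$-box and no violation occurs.  Therefore $v_Q(w)$ is covexillary exactly when $\max I_A \leq \min I_B$, where $I_A, I_B$ are the index sets of boxes in $A$ and $B$ respectively (vacuously true if either is empty).

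The final step is combinatorial bookkeeping.  Setting $\alpha = 1$ if $w(k+1) = 1$ and $\beta = 1$ if $w(k) = n$ (each zero otherwise), Lemma \ref{L:Coess_Nash} identifies $\max I_A = m - 1 + \beta$ and $\min I_B = 2 - \alpha$, so the covexillary criterion becomes the linear bound $m \leq 3 - \alpha - \beta$.  On the other hand, the three bullet points preceding the proposition say the non-inclusion boxes of $\Coess(v_P(w))$ consist of the $m - 2$ interior boxes $(p_i, k)$ with $1 < i < m$, together with $(p_1, k)$ when $\alpha = 1$ and $(p_m, k)$ when $\beta = 1$, for a total of $m - 2 + \alpha + \beta$.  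The condition ``at most one non-inclusion box'' is then the same inequality $m \leq 3 - \alpha - \beta$.  The main obstacle is this bookkeeping, especially handling the edge case $m = 1$ where the first and last endpoint boxes coincide and must be treated separately (for instance by checking directly that $X_w^P$ is already smooth, so both sides of the equivalence hold).
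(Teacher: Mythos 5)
Your argument is correct and follows essentially the same route as the paper's: apply Theorem~\ref{T:main1} to replace the Nash blow-up with $X_w^Q$, note from Lemma~\ref{L:Coess_Nash} that $X_w^Q$ is defined by inclusions, and invoke the Gasharov--Reiner criterion. Your reduction of covexillarity of $v_Q(w)$ to the single inequality $\max I_A \le \min I_B$ (equivalently $m \le 3 - \alpha - \beta$) is a tidy reorganization of the paper's two-direction case analysis rather than a genuinely different method, and your flagged $m=1$ edge case does check out (for $m=1$ the lone box is always an inclusion box, so $X_w^P$ is smooth and both sides hold).
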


\begin{proof}
Suppose $(p_i,k)$ and $(p_j,k)$ are two non-inclusion boxes of $\Coess(v_P(w))$ with $i<j$ and hence $p_i<p_j$.  Then $(p_i,k+p_i-r_i)$ and $(p_j, r_j)$ are both in $\Coess(v_Q(w))$.  Since $p_i<p_j$ and $k+p_i-r_i>k>r_j$, we have that $v_Q(w)$ is not covexillary.  Therefore, $X_w^Q$ is not smooth by Theorem \ref{T:GR_thoerem}.

Now suppose $\Coess(v_P(w))$ has at most one non-inclusion box.  Let $(p,k)$ with $r=r_{p,k}(v_P(w))=w^{-1}(p)$ be the non-inclusion box if it exists.  If it exists, then $(p,r)$ and $(p,k+p-r)$ are boxes in $\Coess(v_Q(w))$.  If $w(k+1)\neq 1$, then $(p_1,p_1)$ is an inclusion box of $\Coess(v_Q(w))$, and if $w(k)\neq n$, then $(p_m, p_m)$ is also an inclusion box.  Note that $p_1<r<p$, $r<k+p-r$, and $p<k+p-r<k+p_m-r_m=p_m$ (whenever the relevant indices exist).  Hence $v_Q(w)$ is covexillary, and $X_w^Q$ is smooth by Theorem \ref{T:GR_thoerem}.
\end{proof}

Note that in Example \ref{Ex:431}, the Nash blow-up of $X_w^P$ is not smooth since $\Coess(v_P(w))$ has two boxes that are not inclusion boxes.

\subsection{Nash blow-ups and resolutions of Schubert varieties}

The configuration space description of the Nash blow-up shows that it is related to resolutions of singularities for Grassmannian Schubert varieties that have previously appeared in the literature.  Fix $w\in W^P$ with $\Coess(v_P(w))=\{(p_i,k)\mid 1\leq i\leq m\}$ and rank numbers $r_1,\ldots, r_m$.  Define
$$Z_w:=\{F_\bullet\in\Fl(r_1,\ldots,r_m,k)\mid F_{r_i}\subseteq E_{p_i} \mbox{ for all } i\},$$
and let $\pi_w: Z_w\rightarrow X_w^P$ be the map given by forgetting the data $F_{r_1},\ldots, F_{r_s}$.  We call the map $\pi_w$ the {\bf Cortez--Zelevinsky resolution} of $X_w^P$.  The variety $Z_w$ is smooth as it is an iterated fiber bundle with the fiber at each iteration being isomorphic to a Grassmannian.  It is one of a family of resolutions of a Schubert variety on the Grassmannian introduced by Zelevinsky in \cite{Ze83}; to be precise, it is the one given by choosing the peaks in order from left to right.  Cortez used this resolution to give an explicit description of the singular locus in \cite{Co03}, and Jones used it to give an explicit formula for the Chern--Mather and Chern--Schwartz--MacPherson classes of Schubert varieties on the Grassmannian in certain cases in \cite{Jo10}.

We also have a {\bf dual Cortez--Zelevinsky resolution} of $X_w^P$, given by
$$Z'_w=\{F_\bullet\in\Fl(k,k+p_1-r_1,\ldots,k+p_m-r_m)\mid E_{p_i}\subseteq F_{k+p_i-r_i} \mbox{ for all } i\}.$$
This is the resolution given by Zelevinsky's construction if one orders the peaks from right to left.  The next result follows immediately from Corollary \ref{C:type_A_Nashblowup}.

\begin{corollary}
Let $X_w^P$ be a Grassmannian Schubert variety.  Then the Nash blow-up of $X_w^P$ is isomorphic to $Z_w\times_{X_w^P} Z'_w$.
\end{corollary}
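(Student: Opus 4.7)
The plan is to verify the isomorphism by identifying both sides with the same configuration-space data. By Theorem \ref{T:main1}, the Nash blow-up of $X_w^P$ is the Schubert variety $X_w^Q$, and in the non-degenerate case $w(k)=n$, $w(k+1)=1$, Corollary \ref{C:type_A_Nashblowup} realizes $X_w^Q$ as the set of partial flags
\[
F_{r_1}\subseteq\cdots\subseteq F_{r_m}\subseteq F_k\subseteq F_{k+p_1-r_1}\subseteq\cdots\subseteq F_{k+p_m-r_m}
\]
subject to the sandwiched inclusions $F_{r_i}\subseteq E_{p_i}\subseteq F_{k+p_i-r_i}$ for each $i$. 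An $S$-point of the fiber product $Z_w\times_{X_w^P}Z'_w$ is, tautologically, a pair of partial flags, one from $Z_w$ and one from $Z'_w$, whose $k$-dimensional subspaces coincide. Splicing these flags at $F_k$ recovers precisely the data of a point in $X_w^Q$; conversely, severing a flag of $X_w^Q$ at its $k$-dimensional term and keeping $F_k$ in both halves produces a pair in $Z_w\times_{X_w^P}Z'_w$.

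Concretely, I would define morphisms $\Phi\colon X_w^Q\to Z_w\times_{X_w^P}Z'_w$ and $\Psi\colon Z_w\times_{X_w^P}Z'_w\to X_w^Q$ by the splicing and severing operations just described, verify they land in the correct targets (using the defining inclusions of $Z_w$, $Z'_w$, and $X_w^Q$), and check that $\Phi\circ\Psi$ and $\Psi\circ\Phi$ are the respective identity morphisms. Because both $\Phi$ and $\Psi$ are built from the forgetful maps and universal subbundles on partial flag varieties, both are manifestly morphisms of varieties, so this suffices for an algebraic isomorphism.

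For the degenerate cases, a minor bookkeeping adjustment is required. If $w(k)<n$ then $r_m=k$, so the $r_m$-piece of a flag in $Z_w$ coincides with its $k$-piece, and the condition $F_{r_m}\subseteq E_{p_m}$ forces $F_k\subseteq E_{p_m}$, which is already a defining condition of $X_w^P$; symmetrically, if $w(k+1)>1$ then $p_1=r_1$ and the condition $E_{p_1}\subseteq F_{k+p_1-r_1}$ collapses into $E_{p_1}\subseteq F_k$. In each case the splicing map still goes through after suppressing these redundant coordinates, matching precisely the simplification of $\Coess(v_Q(w))$ recorded in Lemma \ref{L:Coess_Nash}. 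The main obstacle here is not conceptual but notational: the three descriptions (of $Z_w$, $Z'_w$, and $X_w^Q$) live in slightly different partial flag varieties, so the work lies in aligning the dimension sequences $r_i$ and $k+p_i-r_i$ uniformly across the degenerate and non-degenerate cases; once the indexing is fixed, the isomorphism is essentially tautological.
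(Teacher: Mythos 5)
Your proposal is correct and matches the paper's reasoning: the paper declares this corollary to ``follow immediately'' from Corollary \ref{C:type_A_Nashblowup}, and your functor-of-points splicing/severing argument is precisely the expansion of that observation, with appropriate care in the degenerate cases $w(k)<n$ and $w(k+1)>1$ matching the remarks following Corollary \ref{C:type_A_Nashblowup}.
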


We now describe a conjectural extension of our results for type A Schubert varieties associated to covexillary permutations.  By definition, if $w$ is covexillary, we can write $$\Coess(w)=\{(p_i,q_i)\mid 1\leq i\leq m\}$$ where $p_1\leq\cdots\leq p_m$ and $q_1\leq\cdots\leq q_m$.  Let $r_i=r_{p_i,q_i}(w)$ be the rank numbers of $w$.
Let $P$ be the largest standard parabolic subgroup such that $w$ is a maximal coset representative for $wW_P$.  To be precise, $P$ is generated by $\{\alpha_j\mid w(j)>w(j+1)\}=\{\alpha_j\mid j\neq q_i \mbox{ for all } i\}$.  Note that $P$ may be a non-maximal parabolic subgroup and hence not cominuscule.  The Cortez--Zelevinsky resolution was defined for covexillary Schubert varieties by Cortez~\cite{Co03}, with
$$Z_w=\{(F_\bullet,V_\bullet)\in\Fl(r_1,\ldots,r_m)\times\Fl(q_1,\ldots,q_m)\mid F_{r_i}\subseteq (E_{p_i}\cap V_{q_i})\}.$$
The resolution map $\pi_w: Z_w\rightarrow X_w^P$ is given by projection to the second factor.
The dual Cortez--Zelevinsky resolution can also be described as
$$Z'_w=\{(F_\bullet,V_\bullet)\in\Fl(q_1+p_1-r_1,\ldots,q_m+p_m-r_m)\times\Fl(q_1,\ldots,q_m)\mid F_{q_i+p_i-r_i}\supseteq (E_{p_i}+ V_{q_i})\}.$$

\begin{conjecture}
Let $w$ be a covexillary permutation with parabolic $P$ defined as above.  The Nash blow-up of $X_w^P$ is isomorphic to $Z_w\times_{X_w^P} Z'_w$.
\end{conjecture}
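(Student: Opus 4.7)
The plan is to extend the argument for Corollary \ref{C:type_A_Nashblowup} from the Grassmannian (cominuscule) case to the covexillary case, using the configuration-space description of $X_w^P$. The idea is that on the smooth locus of $X_w^P$, every flag $V_\bullet$ satisfies $\dim(V_{q_i}\cap E_{p_i})=r_i$ and hence $\dim(V_{q_i}+E_{p_i})=q_i+p_i-r_i$ for each $(p_i,q_i)\in\Coess(w)$, so the maps $Z_w\to X_w^P$ and $Z'_w\to X_w^P$ are isomorphisms over this open set. In particular, the fiber product $Z_w\times_{X_w^P} Z'_w$ and the Nash blow-up $\widehat{X_w^P}$ are both birational models of $X_w^P$ that restrict to $X_w^{P,\mathrm{reg}}$.

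First I would show that the tangent space $T_{V_\bullet} X_w^P$ at a smooth point is determined by the subspaces $V_{q_i}\cap E_{p_i}$ and $V_{q_i}+E_{p_i}$. Using the standard description of $T_{V_\bullet}\Fl(q_1,\ldots,q_m)$ and differentiating the determinantal rank equations, the tangent space consists of tuples $(\phi_i\colon V_{q_i}\to \mathbb{C}^n/V_{q_i})$ satisfying flag compatibility and the condition that each $\phi_i$ sends $V_{q_i}\cap E_{p_i}$ into $(V_{q_i}+E_{p_i})/V_{q_i}$. Because this tangent-space data depends only on the intersection and sum subspaces, the assignment $V_\bullet\mapsto(V_{q_i}\cap E_{p_i},\, V_{q_i}+E_{p_i})$ extends the Nash section to give a natural morphism $Z_w\times_{X_w^P} Z'_w \to \widehat{X_w^P}$ that is an isomorphism over the smooth locus.

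The remaining step is to upgrade this birational morphism to an isomorphism. Since both sides are projective and agree over a dense open set, it suffices to show that $Z_w\times_{X_w^P} Z'_w$ is normal (so that Zariski's main theorem applies) and that the morphism is bijective on closed points. Normality should follow once one verifies that the two Cortez--Zelevinsky resolutions meet transversely in a suitable sense, and bijectivity on points can be checked on $T$-fixed points via a combinatorial argument analogous to the Peterson translate description in Theorem \ref{T:main2}.

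The main obstacle is that, unlike the cominuscule case, the unipotent radical of $P$ is no longer abelian, so the $P$-orbit closure $\overline{P\cdot T_w}\subseteq \Gr(d,\mathfrak{g}/\mathfrak{p})$ need not be a partial flag variety $P/Q$, and Theorem \ref{T:main1} does not apply directly. The most promising route appears to be to reduce inductively to the cominuscule case by fibering $G/P$ over an auxiliary Grassmannian corresponding to one of the jumps $q_i$ and peeling off boxes of $\Coess(w)$ one at a time; establishing that the intersection and sum data behave compatibly under this fibration, and that the resulting reduction is compatible with the fiber-product structure of $Z_w\times_{X_w^P} Z'_w$, is the crux of the argument.
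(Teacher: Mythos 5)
The statement you are addressing is stated in the paper as a \emph{conjecture}, not a theorem: the authors explicitly say they have only ``verified for small to medium cases'' that the $T$-fixed-point counts on both sides agree, and they give no proof. So there is no ``paper's own proof'' to compare against; you are being asked to prove something the authors themselves did not.

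Taken on its own terms, your plan contains several genuine gaps. The central claim --- that the tangent space $T_{V_\bullet}X_w^P$ at a smooth point is ``determined by'' the subspaces $V_{q_i}\cap E_{p_i}$ and $V_{q_i}+E_{p_i}$, and that this induces a morphism $Z_w\times_{X_w^P}Z'_w\to\widehat{X_w^P}$ --- is asserted but not established, and it is exactly where the cominuscule proof of Theorem~\ref{T:main1} (via the abelian unipotent radical and the identification of $\overline{P\cdot T_w}$ with $P/Q$) is doing the real work. In the covexillary setting, as you note, $\overline{P\cdot T_w}$ need not be a partial flag variety, so there is no reason for the limit tangent planes over a singular flag $V_\bullet$ to be parameterized precisely by the choice of the $F_{r_i}$ and $F_{q_i+p_i-r_i}$; you would have to show that every point of the Nash fiber \emph{arises} from such a choice and that distinct choices give distinct points. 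You have no argument for either direction.

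The proposed endgame also does not close. Applying Zariski's main theorem requires a normal target; neither $\widehat{X_w^P}$ nor $Z_w\times_{X_w^P}Z'_w$ is a priori normal (a fiber product of smooth varieties over a singular base can fail to be normal, or even irreducible), and ``transversality in a suitable sense'' is not an argument. Moreover, bijectivity on $T$-fixed points does not imply bijectivity on closed points --- a birational, $T$-equivariant, proper map between $T$-varieties can contract positive-dimensional non-$T$-stable loci while being a bijection on fixed points --- so the fixed-point count in Theorem~\ref{T:main2} (which itself is only proved for $P$ cominuscule) is evidence but not proof. Finally, the inductive reduction to the cominuscule case by ``peeling off boxes'' is the part you yourself flag as the crux; without a precise statement of how the Nash blow-up and the two Zelevinsky resolutions behave under the chosen fibration, the plan remains a heuristic. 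In short, the outline identifies the right obstacles but resolves none of them, which is consistent with the fact that the authors were only able to leave this as a conjecture.
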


We have verified for small to medium cases that, for each torus fixed point $vP\in X_w^P$ of the Schubert variety, the number of torus-fixed points in the fiber over $vP$ of $Z_w\times_{X_w^P} Z'_w$ equals the number of eventual Peterson translates of $w$ of the form $(v,N)$ for some subspace $N$ of $T_v(X_w^P)$.

\bibliographystyle{amsalpha}
\bibliography{nash}

\end{document}